\documentclass[english]{amsart}
\usepackage[T1]{fontenc}
\usepackage[utf8]{inputenc}
\usepackage{amsbsy}
\usepackage{amstext}
\usepackage{amsthm}
\usepackage{amssymb}
\usepackage[all]{xy}

\makeatletter

\newcommand*\LyXZeroWidthSpace{\hspace{0pt}}

\numberwithin{equation}{section}
\numberwithin{figure}{section}
\theoremstyle{plain}
\newtheorem{thm}{\protect\theoremname}
\theoremstyle{definition}
\newtheorem{defn}[thm]{\protect\definitionname}
\theoremstyle{definition}

\theoremstyle{plain}
\newtheorem{lem}[thm]{\protect\lemmaname}
\theoremstyle{remark}
\newtheorem{rem}[thm]{\protect\remarkname}
\theoremstyle{plain}
\newtheorem{prop}[thm]{\protect\propositionname}
\theoremstyle{definition}
\newtheorem{construction}[thm]{\protect\constructionname}
\providecommand{\constructionname}{Construction}

\newtheorem*{thmstar}{\protect\theoremname}

\usepackage{graphicx}
\usepackage{wrapfig}
\usepackage{fancyhdr}
\usepackage{geometry}
\usepackage{hyperref}

\makeatother

\usepackage{babel}
\providecommand{\definitionname}{Definition}
\providecommand{\examplename}{Example}
\providecommand{\lemmaname}{Lemma}
\providecommand{\propositionname}{Proposition}
\providecommand{\remarkname}{Remark}
\providecommand{\theoremname}{Theorem}

\begin{document}
\global\long\def\AA{\mathbb{A}}%
\global\long\def\PP{\mathbb{P}}%
\global\long\def\CC{\mathbb{C}}%
\global\long\def\RR{\mathbb{R}}%

\global\long\def\XX{\mathfrak{X}}%

\global\long\def\G{\mathrm{G}}%
\global\long\def\R{\mathrm{R}}%

\global\long\def\V{\boldsymbol{V}}%
\global\long\def\r{\boldsymbol{\underline{r}}}%
\global\long\def\d{\underline{\boldsymbol{d}}}%
\global\long\def\a{\boldsymbol{\underline{\alpha}}}%

\global\long\def\kk{\Bbbk}%

\global\long\def\p{^{\prime}}%
\global\long\def\c{^{\circ}}%

\global\long\def\F{\mathcal{F}}%
\global\long\def\I{\mathcal{I}}%
\global\long\def\X{\mathcal{X}}%
\global\long\def\M{\mathcal{M}}%
\global\long\def\N{\mathcal{N}}%
\global\long\def\W{\mathcal{W}}%

\global\long\def\ot{\otimes}%
\global\long\def\op{\oplus}%
\global\long\def\inv{^{\ast}}%
\global\long\def\su{\subseteq}%
\global\long\def\i{_{\mathrm{in}}}%
\global\long\def\o{_{\mathrm{out}}}%
\global\long\def\f{^{\circ}}%

\global\long\def\ga{\alpha}%
\global\long\def\gb{\beta}%
\global\long\def\gs{\sigma}%
\global\long\def\gl{\lambda}%

\global\long\def\g{\mathfrak{g}}%
\global\long\def\Om{\mathfrak{W}}%
\global\long\def\WW{\mathfrak{W}}%

\global\long\def\Sym{\mathrm{Sym}}%
\global\long\def\Hom{\mathrm{Hom}}%
\global\long\def\Seg{\mathrm{Seg}}%
\global\long\def\GL{\mathrm{GL}}%
\global\long\def\adim{\mathrm{adim}}%
\global\long\def\edim{\mathrm{edim}}%
\global\long\def\Aut{\mathrm{Aut}}%
\global\long\def\Net{\mathrm{Net}}%
\global\long\def\En{\mathrm{En}}%
\global\long\def\Gr{\mathrm{Gr}}%
\global\long\def\pr{\mathrm{pr}}%
\global\long\def\Sub{\mathcal{D}_{\mathrm{neur}}}%
\global\long\def\Da{\mathcal{D}_{\mathrm{arch}}}%
\global\long\def\Of{\Om^{\mathrm{full}}}%
\global\long\def\Lie{\mathrm{Lie}}%
\global\long\def\rk{\mathrm{rk}}%

\title[Algebraic Networks and Architectural Degenerations]
{Algebraic Networks and Architectural Degenerations\\[0.3em]
\normalfont\small First Draft
}
\author{Giacomo Graziani}
\email{giac.graz@gmail.com}

\begin{abstract}
We study the geometry of polynomial neural networks with monomial
activation functions and no bias. We introduce a general framework
of algebraic networks, together with their realization maps and associated
affine neurovarieties. In this setting we define morphisms, subnetworks,
symmetry groups and quotient parameter spaces and we discuss geometric
notions of identifiability and reducibility.

Our main goal is to relate the singularities of neurovarieties to
degenerations of the underlying architecture. For fully connected
networks, we define the architectural degeneracy locus as the locus
of functions admitting a representation by parameters with a rank-deficient
layer or an inactive hidden neuron. We prove that, for fully connected
networks with non-increasing widths and scalar output, full parameters
give smooth points of the corresponding neurovariety under explicit
layerwise regularity assumptions. In particular, for these architectures,
the singular locus is contained in the architectural degeneracy locus.
\end{abstract}

\maketitle

\tableofcontents{}

\section{Introduction}

Neural networks are usually presented through their parameter spaces. However,
from a geometric point of view, the fundamental object is the space of functions
realized by a fixed architecture. For polynomial neural networks this space has
an algebraic nature: after choosing the input and output spaces, the realization
map is a polynomial map from the parameter space to a finite-dimensional vector
space of polynomial functions and the Zariski closure of its image is an
algebraic variety. Following the terminology used in the recent literature, we
refer to these varieties as neurovarieties.

This point of view has been developed in several directions. The dimension of
neurovarieties was proposed in \cite{KTB19} as a measure of expressivity for
deep polynomial neural networks and has since been studied for fully connected
architectures, convolutional architectures and attention models
\cite{KLW24,FRWY25,BUDC,HMK25, SMK26}. More generally, the program of
neuroalgebraic geometry emphasizes that algebro-geometric invariants of
function spaces, such as dimension, degree, singularities and critical point
counts, reflect statistical and optimization-theoretic properties of the
corresponding models \cite{MSMTK25}. In \cite{Gra26} the author studied the
projective geometry of algebraic neural layers and the stable behavior of their
generic Euclidean distance degree. The present paper moves from single layers
to deep architectures and focuses on the relation between singularities and
degenerations of the architecture.

The starting point is the following guiding principle: singularities of a
neurovariety should arise from structural degenerations of the network. This
principle appears in different forms in the recent literature. For linear
networks and for certain convolutional polynomial networks, singular functions
are closely related to smaller subnetworks. In the case of polynomial
convolutional networks, Shahverdi, Marchetti and Kohn prove that, apart from
the vertex of the affine cone, singularities come from functions which can be
represented by smaller convolutional architectures \cite{SMK26}. In
\cite{SMK26}, Shahverdi, Marchetti and Kohn study singularities of MLPs and
CNNs with generic polynomial activations: for MLPs they show that sparse
subnetworks often give singular points, while for CNNs they obtain a complete
description in terms of subnetworks. The survey \cite{MSMTK25} formulates this
phenomenon as part of a broader dictionary between the geometry of
neuromanifolds and learning-theoretic behavior, where subnetworks and implicit
bias correspond to singularities.

The goal of this paper is to give a geometric framework in which this principle
can be made precise, at least in a class of fully connected polynomial networks.
We restrict to networks with monomial activations and no bias. This allows us
to work with homogeneous polynomial maps and with affine neurovarieties in a
fixed vector space of forms. The first part of the paper is foundational: we
introduce algebraic networks, morphisms, subnetworks, symmetry groups and
quotient parameter spaces. The notion of subnetwork is inspired by the abstract
formalism of submodels developed by Shahverdi, Marchetti, B{\"o}kman and Kohn
in \cite{SMBK}, where deep models are treated as sequential compositions of
parametric maps and submodels are used to separate the identifiable part of the
architecture from degenerate representations. Our definitions are more
specialized, since they are designed for algebraic networks with monomial
activations, but they retain the same basic idea: a subnetwork is an
architecture embedded in a larger one which realizes the same end-to-end
function after inclusion of its parameter space.

A second ingredient is the quotient by the natural symmetries of the
parameterization. For fully connected monomial networks, these symmetries are
generated by permutations of hidden neurons and by rescalings compatible with
the activation degrees. They are the algebraic counterpart of the usual
parameter symmetries appearing in identifiability results for polynomial neural
networks \cite{KTB19,KLW24,BUDC}. We remove these symmetries by considering
the affine quotient of the parameter space. This gives a geometric formulation
of identifiability: a network is generically finitely identifiable when the
induced map from the quotient parameter space to the neurovariety has
zero-dimensional general fiber.

The main object introduced in this paper is the architectural degeneracy locus.
For a fully connected network $X$ we define $\W^\circ$ as the locus of full
parameters, namely parameters for which all matrices have maximal rank and no
hidden neuron has zero outgoing column. The architectural degeneracy locus
\[
\Da\left(X\right)=\overline{\Phi_X\left(\W\setminus \W^\circ\right)}
\]
is the locus of functions admitting at least one degenerate representation. In
the fully connected case, under mild assumptions on the widths, we show that
this locus coincides with the closure of the union of the neurovarieties of
proper rank-constrained subnetworks. Thus $\Da\left(X\right)$ provides a geometric
realization of the idea that the architecture has effectively lost rank or that
some hidden neuron is inactive.

\subsection{Structure of the paper}

In Section 1 we introduce algebraic networks. An algebraic network is a triple
\(X=(\N,\W,\r)\), where \(\N\) is a sequence of finite-dimensional vector
spaces, \(\W\) is an irreducible algebraic parameter space inside the product
of the spaces of linear layers and \(\r\) is the vector of activation degrees.
The realization map
\[
\Phi_X:\W\longrightarrow X_{\mathrm{out}}\otimes
\Sym^r(X_{\mathrm{in}}^\ast)
\]
associates with a parameter the polynomial function computed by the network,
and the affine neurovariety \(\F\left(X\right)\) is the Zariski closure of its image. We also define morphisms of algebraic networks. The definition is rigid: it
requires compatibility with both the linear layers and the activation maps. This
rigidity ensures that a morphism of networks induces a well-defined closed
embedding between the corresponding neurovarieties.

In Section 2 we study subnetworks and architectural degenerations. We define
subnetworks categorically, as certain morphisms whose maps on hidden layers are
inclusions of subspaces spanned by subsets of neurons. We then introduce the
full parameter locus \(\W^\circ\) and the architectural degeneracy locus
\(\Da\left(X\right)\). For fully connected networks satisfying a mild condition on the widths, we prove that \(\Da\left(X\right)\) is the closure of the union of the
neurovarieties of proper rank-constrained subnetworks. We also introduce a
notion of reducedness: a network is reduced if its neurovariety cannot be
realized by a proper rank-constrained version of the same architecture.

In Section 3 we discuss symmetries and quotient parameter spaces. To an
algebraic network we associate a group \(\mathcal G\left(X\right)\) of rescalings and
permutations preserving the parameter space. This group is linearly reductive,
and therefore the affine quotient
\[
\Om\left(X\right)=\mathrm{Spec}\bigl(\CC\left[\W\right]^{\mathcal G\left(X\right)}\bigr)
\]
exists. The realization map factors through a morphism $\phi_X:\Om\left(X\right)\to \F\left(X\right)$.
We use this quotient to define identifiability and finite identifiability in
geometric terms. We also prove that, for fully connected networks, generic
finite identifiability implies reducedness.

In Section 4 we prove the regularity result outside the architectural
degeneracy locus. We first introduce good parameters, namely parameters
satisfying
\[
\ker d\Phi_{X,w}=\mathfrak g\left(X\right)\cdot w.
\]
This condition says that the only infinitesimal deformations of the parameter
which do not change the realized function are those coming from infinitesimal
symmetries. For fully connected networks with non-increasing widths, the full
quotient \(\Om^\circ\left(X\right)\) is smooth and formal local isomorphisms between the
quotient and the neurovariety imply goodness of the corresponding parameters.

The technical core of the paper is a layerwise formal reconstruction argument.
For each layer we introduce an auxiliary parameter space \(Q_{a,B,r}(E)\)
parametrizing decompositions of sums of powers with vector coefficients. A
point of this space is called regular if the associated secant parameterization
induces an isomorphism on completed local rings. This condition is a local
version of identifiability for the layer. We then show that, if the relevant
layerwise triples are regular, the completed local rings of the successive
quotients \(\mathcal P_t\) are isomorphic. Iterating over the layers gives an
isomorphism
\[
\widehat{\mathcal O_{\F\left(X\right),\Phi_X\left(w\right)}}\simeq
\widehat{\mathcal O_{\Om^\circ\left(X\right),\left[w\right]}}
\]
for every full parameter \(w\). Since the right-hand side is regular, the
function \(\Phi_X(w)\) is a smooth point of the neurovariety.

\subsection{Main results}

The main result of the paper is the following 

\begin{thmstar}[see Theorem \ref{thm:TeoremaFinale}]
Let \(X\) be a fully connected network with non-increasing widths and
scalar output and suppose that for every layer \(t\) either \(r_{t}\ge3\)
or \(r_{t}=2\) and \(\dim\N_{t}=\dim\N_{t+1}\). Let \(w\in\W\). If
\(\Phi_{X}\left(w\right)\) is singular, then \(w\) contains a matrix
with non-maximal rank or a matrix with a vanishing column. Moreover
\(X\) is reduced in the sense of Definition \ref{def:ReteRidotta} and
generically finitely identifiable.
\end{thmstar}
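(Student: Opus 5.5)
We prove the contrapositive. Assume $w$ is full, i.e.\ $w\in\W^\circ$, and show that $\Phi_X(w)$ is a smooth point of $\F(X)$. The argument follows the outline in the introduction, in four steps.

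\emph{Step 1: smoothness of the quotient.} For fully connected networks with non-increasing widths, $\Om^\circ(X)$ is smooth. The reason is that $\mathcal G(X)$ acts on $\W^\circ$ with finite stabilizers. The rescaling and permutation parts can only fix a full parameter up to a finite group, because columns are nonzero and the matrices have maximal rank. Luna's étale slice theorem then gives smoothness at $[w]$. So it suffices to produce an isomorphism of completed local rings
\[
\widehat{\mathcal O_{\F(X),\Phi_X(w)}}\simeq\widehat{\mathcal O_{\Om^\circ(X),[w]}}
\]
induced by $\phi_X$.

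\emph{Step 2: peeling off one layer at a time.} Since the output is scalar, the last layer exhibits $\Phi_X(w)$ as a sum of $r$-th powers of the outputs of the previous layer. Inductively, the function at layer $t$ is a vector-coefficient sum of powers $\sum_i b_i\,\ell_i^{r_t}$, where the $\ell_i$ are the rows coming from the lower layers. This is exactly a point of the auxiliary space $Q_{a,B,r_t}(E)$. We then define the intermediate quotients $\mathcal P_t$, which parametrize the upper layers together with the function computed by the first $t$ layers, modulo the symmetries acting on the layers already absorbed. The aim is to show that each step $\mathcal P_{t}\to\mathcal P_{t+1}$ induces an isomorphism on completed local rings. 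That holds provided the layerwise secant parameterization $Q_{a,B,r_t}(E)\to E\otimes\Sym^{r_t}$ is formally an isomorphism at the relevant point, i.e.\ the triple is regular.

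\emph{Step 3: regularity of the layerwise triples, which is the main obstacle.} We must show regularity at every full parameter, not just generically, under the hypothesis ``$r_t\ge3$, or $r_t=2$ with equal widths.'' For $r_t\ge3$ we would proceed as follows. The linear forms $\ell_i$ are linearly independent, since the widths are non-increasing and the matrices have maximal rank. A Terracini-type computation then shows that the tangent map of the parameterization is injective modulo the infinitesimal torus action. We also need the map to be formally injective, not just to have injective differential. Here we plan to use uniqueness of Waring-type decompositions with independent summands, which is Kruskal/Jennrich-style: for $r\ge3$ and independent $\ell_i$, the decomposition is unique up to permutation and scaling, and this argument extends over Artinian bases. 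For $r_t=2$ uniqueness fails in general, because quadrics have orthogonal-group ambiguity. With equal widths, however, the layer matrix is square and invertible, and the extra $O(n)$-type freedom can be absorbed into the invertible matrix of the previous layer. So the composite parameterization is still formally an isomorphism after quotienting; we will check this carefully. Iterating over $t$ then gives the displayed isomorphism, and since the right-hand side is regular, $\Phi_X(w)$ is smooth.

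\emph{Step 4: reducedness and finite identifiability.} Step 3 shows that $\phi_X$ is formally étale at every full point. Hence its general fiber is zero-dimensional, and $X$ is generically finitely identifiable. We then invoke the earlier result of Section 3 that, for fully connected networks, generic finite identifiability implies reducedness.
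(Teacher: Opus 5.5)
Your overall architecture is the paper's: smoothness of $\mathfrak{W}^{\circ}(X)$, the tower $\mathcal{P}_t$ compared through the layerwise maps $\Psi$, then generic finite identifiability and Lemma~\ref{lem:GFIimplicaRidotta}. However, Step~3 has a genuine gap in the quadratic case, and the mechanism you propose there would refute the conclusion rather than prove it.

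When $r_t=2$ and $m_t=m_{t+1}$ there is no orthogonal ambiguity at all. The coefficient vectors $c_1,\dots,c_a$ are the columns of the invertible square matrix $w_{t+1}$, hence a basis of $B=\mathcal{N}_{t+1}$. So contracting $\sum_i c_i\otimes\ell_i^2$ with the dual basis returns each $\ell_i^2$ separately. Intrinsically, the image of $B^{*}\to\mathrm{Sym}^2E$ is $\langle\ell_1^2,\dots,\ell_a^2\rangle$, whose only squares are the $\ell_i^2$ up to scalars; this gives uniqueness. Infinitesimally, the coefficient of $x_ix_j$ in a tangent relation is $\alpha_{ij}c_i+\alpha_{ji}c_j$, which vanishes only if $\alpha_{ij}=\alpha_{ji}=0$, precisely because $c_i,c_j$ are independent. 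This is how Proposition~\ref{prop:TripleRegolari} handles $r=2$, $a=b$.

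The orthogonal freedom you have in mind occurs only when two coefficient vectors are proportional. This always happens when $m_{t+1}=1<m_t$, where $\sum_i c_i\ell_i^2$ is preserved by the orthogonal group of $\mathrm{diag}(c_i)$. Absorbing that freedom into the previous layer, i.e.\ replacing $w_t$ by $Mw_t$, produces a positive-dimensional family of full parameters with the same realization. This family is not contained in a $\mathcal{G}(X)$-orbit, since $\mathcal{G}(X)$ contains only rescalings and permutations. So the fibres of $\phi_X$ become positive-dimensional, and you cannot get a formal isomorphism after quotienting. This is exactly why the theorem requires $m_t=m_{t+1}$ when $r_t=2$.

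Two smaller points. First, for $r_t\ge3$, injectivity of the differential already makes $\widehat{\mathcal{O}}_{\sigma_a(Y),y}\to\widehat{\mathcal{O}}_{Q,x}$ surjective, i.e.\ the map on deformation functors is already injective. So uniqueness ``over Artinian bases'' adds nothing. What is missing is existence: every infinitesimal deformation of $y$ inside $\sigma_a(Y)$ must lift to a deformation of the decomposition, i.e.\ $\sigma_a(Y)$ must have a single branch at $y$. The paper obtains this by asserting that $\Psi$ is an open immersion, from pointwise uniqueness, unramifiedness and density of the image. To make that complete one must also exclude branches coming from decompositions that degenerate while their sum tends to $y$, since injective plus unramified does not by itself give an immersion.

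Second, in Step~1, finite stabilizers plus Luna only give an étale-local model $N_w/\Gamma_w$ (normal slice modulo the stabilizer), which is in general a quotient singularity. What you need, and what holds by Proposition~\ref{prop:AmpiezzeNonCrescentiLuogoPienoLiscio}, is that stabilizers of full parameters are trivial. Every $w_i$ is surjective, so $g\cdot w=w$ forces $g=1$ layer by layer.
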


The proof of Theorem \ref{thm:TeoremaFinale} is obtained by combining
the layerwise regularity criterion of Theorem
\ref{thm:=00005CW=00005CcImplicaLiscio} with a classification
of regular triples in Proposition \ref{prop:TripleRegolari}. The first
ingredient shows that, under a suitable local regularity condition at each
layer, every full parameter realizes a smooth point of the neurovariety.
The second ingredient verifies this local condition in the concrete cases
appearing in the statement of the theorem.

The remaining conclusions follow from the quotient-theoretic part of the
paper. We introduce good parameters in Definition \ref{def:ParametroGood}
and prove in Proposition \ref{prop:BuonoImplicaGFI} that the existence of
a good parameter is equivalent, for fully connected networks, to generic
finite identifiability. Finally, Lemma \ref{lem:GFIimplicaRidotta} shows
that a fully connected generically finitely identifiable network is reduced.
Thus the same local analysis which excludes singularities from the full
locus also implies the identifiability and reducedness statements in
Theorem \ref{thm:TeoremaFinale}.

These results should be viewed as a first formal instance of the principle
that singularities of neurovarieties arise from degenerations of the
architecture. The framework is intentionally general enough to separate the
formal mechanism from the particular fully connected case treated here. The
hope is that the same language of algebraic networks, subnetworks, quotient
parameter spaces and architectural degeneracy loci can be adapted to other
algebraic architectures, such as convolutional, equivariant or tensor-network
models.

\section{Algebraic networks and realizations}

All vector spaces and schemes are over $\CC$. We restrict, for our
definition, to networks that have monomial activation and without
bias.
\begin{defn}
An algebraic network is a triple $X=\left(\N,\W,\r\right)$ where
\begin{itemize}
\item $\N=\N_{0},\dots,\N_{L}$ is a sequence of finite dimensional vector
spaces together with the choice of a basis $\left(e_{i}^{j}\right)_{\begin{array}{c}
i=1,\dots,L-1\\
j=1,\dots,\dim\N_{i}
\end{array}}$ for each space $\N_{1},\dots,\N_{L-1}$. The spaces $\N_{0}$ and
$\N_{L}$ are called the input space and the output space respectively,
denoted with $X\i$ and $X\o$, the other spaces are called the hidden
layers of the network and the number of hidden layers is called the
length of the network. 
\item $\W$ is a Zariski closed and irreducible subset of
\begin{equation}
\W\subseteq\Hom_{\CC}\left(\N_{0},\N_{1}\right)\times\Hom_{\CC}\left(\N_{1},\N_{2}\right)\times\dots\times\Hom_{\CC}\left(\N_{L-1},\N_{L}\right)\label{eq:ParameterSpace}
\end{equation}
containing 0, called the parameter space of the network. We say that
$X$ is fully connected if equality holds in (\ref{eq:ParameterSpace})
and more generally that $X$ is decomposable if $\W$ writes as a
product of $\W_{i}\subseteq\Hom_{\CC}\left(\N_{i-1},\N_{i}\right)$.
\item $\r=\left(r_{1},\dots,r_{L-1}\right)$ is a vector of integers $r_{i}\ge1$
called the activation degrees. We say that the network is linear if
$r_{i}=1$ for every $i$, that is is quadratic if $r_{i}=2$ for
every $i$ and so on.
\end{itemize}
The realization map is
\[
\begin{aligned}\Phi_{X}:\W & \to X\o\ot\Sym^{r}\left(X\i\inv\right)\\
w=\left(w_{1},\dots,w_{L}\right) & \mapsto\Phi_{X}\left(w\right)
\end{aligned}
\quad\quad\begin{aligned}\Phi_{X}\left(w\right):X\i & \to X\o\\
x & \mapsto\left(w_{L}\circ\sigma_{r_{L-1}}\circ\dots\circ w_{2}\circ\sigma_{r_{1}}\circ w_{1}\right)\left(x\right)
\end{aligned}
\]
where $r=r_{1}\cdot\dots\cdot r_{L-1}$ is called the total degree
and $\sigma_{r_{i}}:\N_{i}\to\N_{i}$ is the functions that raises
each component (with respect to the fixed basis) to the power $r_{i}$.
Finally we denote with $\F\left(X\right)\su X\o\ot\Sym^{r}\left(X\i\inv\right)$
the Zariski closure of the image of $\Phi_{X}$, called the affine
neurovariety associated with $X$. since $\W$ is assumed to be irreducible,
the image $\Phi_{X}\left(\W\right)$ is irreducible and hence also
the variety $\F\left(X\right)$ is irreducible.
\end{defn}


In view of \cite{GLDGAV} and \cite{SMBK} we give the following definition
of morphism between PNN
\begin{defn}
\label{def:MorphismPNN}Given two algebraic networks $X=\left(\N,\W,\r\right)$
and $Y=\left(\N\p,\W\p,\r^{\prime}\right)$ of the same length $L$,
total degree $r$ and $X\i=Y\i$, $X\o=Y\o$, we define a morphism
$f:X\to Y$ as a pair
\[
f=\left(\left(f_{i}\right)_{i=1}^{L-1},\beta\right)
\]
where $\beta:\W\to\W\p$ is an algebraic morphism and $f_{i}:\N_{i}\to\N_{i}\p$
are linear maps with $f_{0}=\mathrm{id}_{X\i}$ and $f_{L}=\mathrm{id}_{X\o}$
and such that
\[
f_{i}\circ w_{i}=\beta\left(w\right){}_{i}\circ f_{i-1},\qquad\mbox{and}\qquad f_{i}\circ\sigma_{r_{i}}=\sigma_{r_{i}\p}\circ f_{i}
\]
for every $i=1,\dots,L-1$. In the case when $\W=\prod\W{}_{i}$ and
$\W\p=\prod\W\p_{i}$ are decomposable we say that the morphism respects
the decomposition if $\beta=\prod\beta_{i}$ with $\beta_{i}:\W{}_{i}\to\W\p_{i}$
are morphisms of affine varieties.
\end{defn}

Given a morphism $f:X\to Y$ as in Definition \ref{def:MorphismPNN}
we have a natural map $\F\left(f\right):\F\left(X\right)\to\F\left(Y\right)$
with $\F\left(f\right):\Phi_{X}\left(w\right)\mapsto\Phi_{Y}\left(\beta\left(w\right)\right)$
which is continuous for the Zariski topology, due to the polynomiality
of $\beta$. We see that the definition of $\F\left(f\right)$ is
well-posed in the next Lemma. See \cite{CGS} for other examples.
\begin{lem}
\label{lem:RealizationMorphism}Let $f:X\to Y$ be a morphism of algebraic
networks as in Definition \ref{def:MorphismPNN}. Then $\Phi_{Y}\left(\beta\left(w\right)\right)=\Phi_{X}\left(w\right)$.
In particular $\Phi_{X}\left(\W\right)\su\Phi_{Y}\left(\W\p\right)$
hence $\F\left(X\right)\su\F\left(Y\right).$ Thus $f$ induces a
well-defined closed embedding
\[
\F\left(f\right):\F\left(X\right)\hookrightarrow\F\left(Y\right).
\]
\end{lem}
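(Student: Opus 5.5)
The plan is to prove the identity $\Phi_{Y}\left(\beta\left(w\right)\right)=\Phi_{X}\left(w\right)$ by a layer-by-layer telescoping argument; the remaining assertions then follow formally. Fix $w=\left(w_{1},\dots,w_{L}\right)\in\W$ and write $w\p=\beta\left(w\right)$. Define partial realizations
\[
h_{i}=\sigma_{r_{i}}\circ w_{i}\circ\dots\circ\sigma_{r_{1}}\circ w_{1}:X\i\to\N_{i},\qquad h_{i}\p=\sigma_{r_{i}\p}\circ w_{i}\p\circ\dots\circ\sigma_{r_{1}\p}\circ w_{1}\p:X\i\to\N_{i}\p
\]
for $i=0,\dots,L-1$, with $h_{0}=h_{0}\p=\mathrm{id}_{X\i}$.

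The central step is the claim $f_{i}\circ h_{i}=h_{i}\p$ for every $i$, which I will prove by induction on $i$. The base case is $f_{0}=\mathrm{id}_{X\i}$. For the inductive step, assume the claim for $i-1$. Then
\[
f_{i}\circ h_{i}=f_{i}\circ\sigma_{r_{i}}\circ w_{i}\circ h_{i-1}=\sigma_{r_{i}\p}\circ f_{i}\circ w_{i}\circ h_{i-1}=\sigma_{r_{i}\p}\circ w_{i}\p\circ f_{i-1}\circ h_{i-1}=\sigma_{r_{i}\p}\circ w_{i}\p\circ h_{i-1}\p=h_{i}\p .
\]
This uses the two compatibility conditions of Definition \ref{def:MorphismPNN} in turn: first the activation compatibility $f_{i}\circ\sigma_{r_{i}}=\sigma_{r_{i}\p}\circ f_{i}$, then the layer compatibility $f_{i}\circ w_{i}=w_{i}\p\circ f_{i-1}$. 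All these are identities of polynomial maps on $X\i$, so no genericity is involved.

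For the final layer I will use the layer compatibility at $i=L$, namely $w_{L}=f_{L}\circ w_{L}=w_{L}\p\circ f_{L-1}$. This gives
\[
\Phi_{X}\left(w\right)=w_{L}\circ h_{L-1}=w_{L}\p\circ f_{L-1}\circ h_{L-1}=w_{L}\p\circ h_{L-1}\p=\Phi_{Y}\left(w\p\right).
\]
The main point needing care is exactly here. The definition lists the compatibility for $i=1,\dots,L-1$, but the conventions $f_{0}=\mathrm{id}$ and $f_{L}=\mathrm{id}$ only make sense if the layer condition is also imposed at $i=L$. I will read the definition this way, and I will remark that without it the lemma can fail (for instance, take $\beta$ to alter only the last layer). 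A minor additional point is that $\Phi_{X}$ and $\Phi_{Y}$ land in the same space $X\o\ot\Sym^{r}\left(X\i\inv\right)$, which holds because the total degrees agree by hypothesis.

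The remaining statements then follow. The image inclusion $\Phi_{X}\left(\W\right)\su\Phi_{Y}\left(\beta\left(\W\right)\right)\su\Phi_{Y}\left(\W\p\right)$ is immediate, and taking Zariski closures in the common ambient space gives $\F\left(X\right)\su\F\left(Y\right)$. Well-definedness of $\F\left(f\right)$ is now clear: if $\Phi_{X}\left(w\right)=\Phi_{X}\left(v\right)$, then $\Phi_{Y}\left(\beta\left(w\right)\right)=\Phi_{Y}\left(\beta\left(v\right)\right)$ by the identity. In fact $\F\left(f\right)$ is the identity on $\Phi_{X}\left(\W\right)$, so it extends to the inclusion $\F\left(X\right)\hookrightarrow\F\left(Y\right)$. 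This inclusion is a closed embedding because $\F\left(X\right)$ is a closed subvariety of the ambient space and hence of $\F\left(Y\right)$.
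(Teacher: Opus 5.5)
Your proof is correct and follows essentially the same route as the paper's. Both telescope the realization through the commuting squares $f_{i}\circ w_{i}=\beta(w)_{i}\circ f_{i-1}$ and $f_{i}\circ\sigma_{r_{i}}=\sigma_{r_{i}'}\circ f_{i}$: the paper does this in a single displayed chain, you by induction on partial realizations, and in both the closed-embedding claim reduces to the inclusion of closures. Your remark that the layer compatibility must also hold at $i=L$ matches how the paper's proof reads the definition, since it explicitly invokes that condition for every $i=1,\dots,L$.
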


\begin{proof}
Let $w=\left(w_{1},\dots,w_{L}\right)\in\W$. By Definition \ref{def:MorphismPNN},
for every $i=1,\dots,L$ we have $f_{i}\circ w_{i}=\beta\left(w\right)_{i}\circ f_{i-1}$
and for every $i=1,\dots,L-1$ we have $f_{i}\circ\sigma_{r_{i}}=\sigma_{r_{i}\p}\circ f_{i}.$
Therefore
\[
\begin{aligned}\Phi_{Y}\left(\beta\left(w\right)\right) & =\beta\left(w\right)_{L}\circ\sigma_{r_{L-1}\p}\circ\beta\left(w\right)_{L-1}\circ\cdots\circ\beta\left(w\right)_{2}\circ\sigma_{r_{1}\p}\circ\beta\left(w\right)_{1}\\
 & =f_{L}\circ w_{L}\circ\sigma_{r_{L-1}}\circ w_{L-1}\circ\cdots\circ w_{2}\circ\sigma_{r_{1}}\circ w_{1}\circ f_{0}=\Phi_{X}\left(w\right).
\end{aligned}
\]
The rest is clear.
\end{proof}
\begin{rem}
The notion of morphism in Definition \ref{def:MorphismPNN} is rather
rigid. Indeed, the compatibility condition $f_{i}\circ\sigma_{r_{i}}=\sigma_{r_{i}\p}\circ f_{i}$
strongly restricts the possible linear maps $f_{i}$ and therefore
one should not expect many morphisms between two fixed algebraic networks.
The motivation for this rigidity is categorical. In view of \cite{GLDGAV},
a network can be regarded as a directed diagram whose vertices are
the latent spaces $\N_{0},\N_{1},\dots,\N_{L}$ and whose elementary
arrows are the linear layers and the activation maps. A morphism of
networks is then required to be a morphism of these underlying diagrams:
the maps $f_{i}$ act on the vertices, while $\beta$ transforms the
parameter-dependent arrows. The commutativity conditions in Definition
\ref{def:MorphismPNN} say precisely that all elementary faces of
this diagram commute. Consequently, the end-to-end realization is
preserved, as shown in Lemma \ref{lem:RealizationMorphism}.
\end{rem}

\section{Subnetworks}

Fix two finite-dimensional vector spaces $V\i,V\o$, an integer $L\ge1$
and total degree $r$, we consider the category $\Net=\Net\left(V\i,V\o,L,\r\right)$
whose objects are algebraic networks $X$ with $X\i=V\i$, $X\o=V\o$,
length $L$ and activations whose total degree is $r$. Morphisms
are as in Definition \ref{def:MorphismPNN} and one readily checks
that we form a category. We rephrase the definition of subnetwork
given in \cite{SMBK}.
\begin{defn}
Given an object $X=\left(\N,\W,\r\right)$ of $\Net$ , a subnetwork
$Y=\left(\N\p,\W\p,\r\p\right)$ of $X$ is a morphism $s:Y\to X$
such that
\begin{itemize}
\item $Y\i=X\i$ and $Y\o=X\o$ and $s_{i}:\N_{i}\p\to\N_{i}$ is the inclusion
of the subspace spanned by a subset of neurons
\item $\beta:\W\p\to\W$ is a closed embedding. 
\end{itemize}
If $s:Y\to X$ is a subnetwork we define the support of the subnetwork
as the image of the closed embedding
\[
\F\left(s\right):\F\left(Y\right)\hookrightarrow\F\left(X\right).
\]
We say that a subnetwork $Y\su X$ is strict if at least one of the
maps $s_{i}$ is not an isomorphism. if either at least one of the
maps $s_{i}$ or $\beta$ is not an isomorphism we say that it is
proper, denoted $Y\subsetneq X$.
\end{defn}


\subsection{Degenerate loci}
\begin{defn}
Let $X=\left(\N,\W,\r\right)$ be an algebraic network. We define
$\W^{\circ}\subseteq\W$ as the subset of parameters $w=\left(w_{1},\ldots,w_{L}\right)$
such that every matrix $w_{i}$ has maximal rank and, for every hidden
layer $t=1,\ldots,L-1$ and every neuron $e_{t}^{j}\in\N_{t}$, one
has $w_{t+1}\left(e_{t}^{j}\right)\neq0$. We set 
\[
\Da\left(X\right)=\overline{\Phi_{X}\left(\left(\W\c\right)^{\mathrm{c}}\right)}.
\]
\end{defn}

\begin{rem}
The definition of $\W\c$ means that, with the exception of the matrix
$w_{1}:X\i\to\N_{1}$, we require all the component $w_{2},\dots,w_{L}$
of $w$ to have no column which is identically 0. Note that in general
$\W\c$ is an open subset of $\W$, even though it can be empty. A
notable case when it is non empty is the case when $X$ is fully connected.
$\Da\left(X\right)$ is then the closure of the locus of functions
admitting at least one degenerate representation, namely a representation
by a parameter for which either some matrix has non-maximal rank or
some hidden neuron has zero outgoing column. While having non-maximal
rank is related to the function belonging to a subnetwork, for instance
functions for which a neuron is not used, the condition about the
columns captures the idea that a function is realized by a neuron
that has inputs but doesn't produce any output. 
\end{rem}

\begin{defn}
\label{def:ReteRidotta}Let $X=\left(\N,\W,\r\right)$ be an algebraic
network of length $L$ and let $\d=\left(d_{1},\dots,d_{L}\right)$
be a sequence of non-negative integers. We define $X^{\d}$ as the
subnetwork of $X$ obtained with $\N\left(X^{\d}\right)=\N\left(X\right)$,
but with space of parameters 
\[
\W^{\d}=\left\{ W\in\W\,\vert\,\mathrm{rk}\left(W_{i}\right)\le d_{i}\quad\mbox{for every \ensuremath{i}}\right\} 
\]
whenever they are irreducible. We say that $\d$ is proper if $\W^{\d}\neq\W$
and that the network $X$ is reduced if there exists no proper sequence
$\d$ such that $\F\left(X^{\d}\right)=\F\left(X\right)$.
\end{defn}

\begin{prop}
\label{prop:IrriducibilitaArchitetturale}Let $X=\left(\N,\W,\r\right)$
be a fully connected network and suppose that for every $t=1,\ldots,L-1$
either $\dim\N_{t}\le\dim\N_{t-1}$ or $\dim\N_{t}\le\dim\N_{t+1}$.
Then we have 
\[
\Da\left(X\right)=\overline{\bigcup_{\W^{\d}\neq\W}\F\left(X^{\d}\right)}.
\]
Moreover under the previous assumptions on $\dim\N_{t}$ we see that
$X$ is reduced if only if $\F\left(X\right)\neq\Da\left(X\right)$.
\end{prop}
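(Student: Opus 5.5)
We prove the two inclusions separately and then deduce the reducedness criterion.

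\emph{Inclusion $\supseteq$.} Let $\d$ be a sequence with $\W^{\d}\neq\W$. Then some $d_{i}<\min\left(\dim\N_{i-1},\dim\N_{i}\right)$, so every $w\in\W^{\d}$ has a matrix $w_{i}$ of non-maximal rank. Hence $\W^{\d}\su\left(\W\c\right)^{\mathrm{c}}$, and therefore
\[
\F\left(X^{\d}\right)=\overline{\Phi_{X}\left(\W^{\d}\right)}\su\overline{\Phi_{X}\left(\left(\W\c\right)^{\mathrm{c}}\right)}=\Da\left(X\right).
\]
Since $\Da\left(X\right)$ is closed, it contains the closure of the union. This direction does not use the width hypothesis.

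\emph{Inclusion $\subseteq$.} Write $\left(\W\c\right)^{\mathrm{c}}=R\cup C$. Here $R$ is the set of parameters with some $w_{i}$ of non-maximal rank, and $C$ is the set of parameters with some hidden neuron $e_{t}^{j}$ satisfying $w_{t+1}\left(e_{t}^{j}\right)=0$.

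For $R$, note that $R=\bigcup_{i}\W^{\d^{(i)}}$, where $\d^{(i)}$ imposes the single condition $\rk\left(w_{i}\right)\le\min\left(\dim\N_{i-1},\dim\N_{i}\right)-1$. Each $\W^{\d^{(i)}}$ is a product of a determinantal variety with full $\Hom$ spaces, hence irreducible. So $\Phi_{X}\left(R\right)\su\bigcup\F\left(X^{\d}\right)$.

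For $C$, take $w$ with $w_{t+1}\left(e_{t}^{j}\right)=0$. The idea is to replace $w$ by a parameter $w\p\in R$ with $\Phi_{X}\left(w\p\right)=\Phi_{X}\left(w\right)$. Since neuron $j$ of layer $t$ contributes nothing to the output, we may freely modify the $j$-th row of $w_{t}$ and the $j$-th column of $w_{t+1}$ without changing the function.
\begin{itemize}
\item If $\dim\N_{t}\le\dim\N_{t+1}$, then $w_{t+1}$ already has a zero column among $\dim\N_{t}$ columns. So $\rk\left(w_{t+1}\right)<\dim\N_{t}=\min\left(\dim\N_{t},\dim\N_{t+1}\right)$, and $w\in R$ directly.
\item If $\dim\N_{t}\le\dim\N_{t-1}$, set the $j$-th row of $w_{t}$ to zero. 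This leaves $\Phi_{X}$ unchanged. The new $w_{t}$ has a zero row among $\dim\N_{t}$ rows, so it has rank $<\dim\N_{t}=\min\left(\dim\N_{t-1},\dim\N_{t}\right)$, and the new parameter lies in $R$.
\end{itemize}
Thus $\Phi_{X}\left(C\right)\su\Phi_{X}\left(R\right)$, and taking closures gives $\Da\left(X\right)\su\overline{\bigcup\F\left(X^{\d}\right)}$.

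The only delicate point is checking that the modification in the second case preserves the realization. It does, because after $\sigma_{r_{t}}$ the $j$-th coordinate is multiplied by the zero column of $w_{t+1}$, whatever its value. The width hypothesis is exactly what guarantees that one of the two cases applies at each hidden layer.

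\emph{Reducedness.} First, there are only finitely many sequences $\d$ up to the effective bound $d_{i}\le\min\left(\dim\N_{i-1},\dim\N_{i}\right)$, so the union is finite. Each proper $\W^{\d}$ is contained in some $\W^{\d^{(i)}}$, and $\F$ is monotone in $\d$. Hence
\[
\Da\left(X\right)=\bigcup_{i}\F\left(X^{\d^{(i)}}\right)
\]
is a finite union of closed irreducible subvarieties of the irreducible variety $\F\left(X\right)$.

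Suppose $\F\left(X\right)=\Da\left(X\right)$. By irreducibility, $\F\left(X\right)=\F\left(X^{\d^{(i)}}\right)$ for some $i$, and since $\d^{(i)}$ is proper, $X$ is not reduced.

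Conversely, suppose $\F\left(X^{\d}\right)=\F\left(X\right)$ for some proper $\d$. Then $\F\left(X\right)\su\Da\left(X\right)\su\F\left(X\right)$, so $\F\left(X\right)=\Da\left(X\right)$.

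Together these show that $X$ is reduced if and only if $\F\left(X\right)\neq\Da\left(X\right)$.
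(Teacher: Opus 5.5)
Your proof is correct and follows the paper's argument essentially step by step. The common steps are the trivial inclusion, then either zeroing the $j$-th row of $w_t$ (when $\dim\N_t\le\dim\N_{t-1}$) or noting the rank drop of $w_{t+1}$ (when $\dim\N_t\le\dim\N_{t+1}$), and finally irreducibility of $\F(X)$ against a finite union. Your reduction to the maximal proper sequences $\d^{(i)}$ and your explicit proof of the converse direction of the reducedness criterion are small tidy-ups of points the paper leaves implicit.
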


\begin{proof}
Clearly the inclusion
\[
\overline{\bigcup_{\W^{\d}\neq\W}\F\left(X^{\d}\right)}\su\Da\left(X\right)
\]
holds for every network. Let $w\in\W\setminus\W\c$, then either some
matrix $w_{i}$ has non-maximal rank, or some hidden neuron has zero
outgoing column. If some matrix $w_{i}$ has non-maximal rank, then
$w\in\W^{\d}$ for some $\W^{\d}\neq\W$. If some hidden neuron has
zero outgoing column there exist a layer $t$ and a basis vector $e_{t}^{j}\in\N_{t}$
such that $w_{t+1}\left(e_{t}^{j}\right)=0$. In the case when $\dim\N_{t}\le\dim\N_{t-1}$
we let $\ensuremath{\widetilde{w}}$ be the parameter obtained from
$w$ by replacing the $j$-th row of $w_{t}$ by zero and leaving
all the other matrices unchanged. Since $X$ is fully connected, we
still have $\widetilde{w}\in\W$ and clearly $\Phi_{X}\left(\widetilde{w}\right)=\Phi_{X}\left(w\right)$.
In the case when $\dim\N_{t}\le\dim\N_{t+1}$ having a zero column
already means that the matrix does not have maximal rank. In either
case $\Phi_{X}\left(w\right)\in\F\left(X^{\d}\right)$ for some proper
$\d$ and we conclude that
\[
\Da\left(X\right)\su\overline{\bigcup_{\W^{\d}\neq\W}\F\left(X^{\d}\right)}
\]
since the right-hand side is closed. Since $X$ is fully connected,
each $\W^{\d}$ is a product of determinantal varieties, hence irreducible.
Therefore each $\F\left(X^{\d}\right)$ is irreducible. Moreover there
are only finitely many possible $\d$'s. If $\F\left(X\right)=\Da\left(X\right)$
then the irreducible variety $\F\left(X\right)$ is a finite union
of closed subsets
\[
\F\left(X\right)=\bigcup_{\W^{\d}\neq\W}\F\left(X^{\d}\right).
\]
Hence one of them must be equal to $\F\left(X\right)$, namely $\F\left(X^{\d}\right)=\F\left(X\right)$
for some proper $\d$.
\end{proof}

\section{Symmetries}

Fix an algebraic network $X=\left(\N,\W,\r\right)$ and from now on,
unless otherwise stated, we define its fully connected envelope as
$\widehat{X}=\left(\N,\widehat{\W},\r\right)$ where
\[
\widehat{\W}=\prod\Hom_{\CC}\left(\N_{i-1},\N_{i}\right)
\]
If $\ga_{i}=\dim\left(\N_{i}\right)$, in view of \cite[Lemma 4]{BUDC}
we define the group of symmetries of $\widehat{X}$ as
\[
\mathcal{G}\left(\widehat{X}\right)=\prod_{i=1}^{L-1}\mathbb{G}_{m}^{\ga_{i}}\rtimes S_{\ga_{i}}
\]
and the action of an element $g=\left(D_{i},P_{i}\right)_{i=1,\dots,L}\in\mathcal{G}\left(\widehat{X}\right)$
on $w=\left(w_{i}\right)_{i=1,\dots,L}\in\widehat{\W}$ is
\[
g\cdot w=\left(P_{i\LyXZeroWidthSpace}D_{i}\LyXZeroWidthSpace w_{i}\LyXZeroWidthSpace D_{i-1}^{-r_{i-1}}\LyXZeroWidthSpace\LyXZeroWidthSpace P_{i-1}^{-1}\LyXZeroWidthSpace\right)_{i=1,\dots,L}
\]
with the conventions that $P_{0}=D_{0}=P_{L}=D_{L}=\mathrm{id}$.
More generally we define
\[
\mathcal{G}\left(X\right)=\left\{ g\in\mathcal{G}\left(\widehat{X}\right)\,\vert\,g\cdot\W=\W\right\} 
\]
and let $\Om\left(X\right)$ denote the affine scheme $\mathrm{Spec}\left(\kk\left[\W\right]^{\mathcal{G}\left(X\right)}\right)$.
\begin{lem}
\label{lem:GXPreservaWc}Let $X=\left(\N,\W,\r\right)$ be an algebraic
network, then $\W\c$ is invariant under the action of $\mathcal{G}\left(X\right)$.
\end{lem}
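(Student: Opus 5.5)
Since $\mathcal{G}\left(X\right)$ preserves $\W$ by definition, it suffices to show that for every $g=\left(D_{i},P_{i}\right)\in\mathcal{G}\left(\widehat{X}\right)$ and every $w\in\W\c$ the parameter $g\cdot w$ again satisfies the two defining conditions of $\W\c$. The key remark is that each component of $g\cdot w$ is obtained from $w_{i}$ by multiplying on the left and on the right by invertible matrices of a very special shape, namely monomial matrices: products of a permutation matrix with an invertible diagonal matrix. We will check the two conditions of $\W\c$ separately.

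For the rank condition we use that $(g\cdot w)_{i}=P_{i}D_{i}w_{i}D_{i-1}^{-r_{i-1}}P_{i-1}^{-1}$, where $P_{i}D_{i}\in\GL\left(\N_{i}\right)$ and $D_{i-1}^{-r_{i-1}}P_{i-1}^{-1}\in\GL\left(\N_{i-1}\right)$. Here $D_{i-1}$ is a diagonal matrix with entries in $\mathbb{G}_{m}$, so its powers are invertible, and the boundary conventions $P_{0}=D_{0}=P_{L}=D_{L}=\mathrm{id}$ cover the first and last layers. Hence $\rk\left((g\cdot w)_{i}\right)=\rk\left(w_{i}\right)$, and maximal rank is preserved.

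For the column condition, fix a hidden layer $t$ and a neuron $e_{t}^{j}$. Write $P_{t}^{-1}e_{t}^{j}=e_{t}^{\sigma(j)}$ for the permutation $\sigma$ associated with $P_{t}$. Since $D_{t}^{-r_{t}}$ is diagonal with nonzero entries, we get $D_{t}^{-r_{t}}e_{t}^{\sigma(j)}=\lambda\, e_{t}^{\sigma(j)}$ with $\lambda\neq0$. Therefore
\[
(g\cdot w)_{t+1}\left(e_{t}^{j}\right)=\lambda\, P_{t+1}D_{t+1}\,w_{t+1}\left(e_{t}^{\sigma(j)}\right),
\]
which is nonzero because $w_{t+1}\left(e_{t}^{\sigma(j)}\right)\neq0$ and $P_{t+1}D_{t+1}$ is invertible.

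The only point requiring care is the order of operations in this column computation. One has to verify that the right factor $D_{t}^{-r_{t}}P_{t}^{-1}$ sends basis vectors to nonzero multiples of basis vectors, so that it only permutes and rescales columns. This holds whichever order the diagonal and permutation factors appear in, since a diagonal matrix conjugated by a permutation matrix is again diagonal. Once this is checked, the two conditions together give $g\cdot\W\c\su\W\c$. Applying the same inclusion to $g^{-1}$, which also lies in $\mathcal{G}\left(X\right)$ because it is a group, gives equality.
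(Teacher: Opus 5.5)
Your proof is correct and follows the same idea as the paper: each component of $g\cdot w$ is obtained from $w_i$ by multiplying on both sides by invertible monomial matrices, which preserves rank and only permutes and rescales columns, so no zero outgoing column can appear. You are slightly more explicit than the paper, which leaves the rank condition implicit and does not spell out the boundary conventions or the passage from $g\cdot\mathcal{W}^{\circ}\subseteq\mathcal{W}^{\circ}$ to equality via $g^{-1}$.
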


\begin{proof}
Let $g\cdot w_{i}=P_{i}D_{i}w_{i}D_{i-1}^{-r_{i-1}}P_{i-1}^{-1}$
be the action of $g=\left(D,P\right)\in\mathcal{G}\left(X\right)$,
then $P_{i-1}^{-1}$ exchanges the order of the columns and $D_{i-1}^{-r_{i-1}}$
is the product by a diagonal matrix hence they do not generate zero
columns, on the left the product $P_{i}D_{i}$ acts on the lines by
permuting them and multiplying them by elements of the torus. This
shows that $\mathcal{G}\left(X\right)$ preserves $\W\c$.
\end{proof}
\begin{thm}
\label{thm:LinearlyReductiveGroup}The group $\mathcal{G}\left(X\right)$
is linearly reductive, in particular $\Om\left(X\right)$ has a natural
structure of affine algebraic variety over $\CC$ such that the map
$\pi_{X}:\W\to\Om\left(X\right)$ is algebraic, dominant and for every
morphism $\xi:\W\to Y$ of algebraic varieties over $\CC$ which is
constant on the orbits of the action of $\mathcal{G}\left(X\right)$
there exists a unique morphism $\xi\p:\Om\left(X\right)\to Y$ such
that
\[
\xi=\xi\p\circ\pi_{X}.
\]
Moreover, if $U\su\W$ is an open subset that is invariant under the
action of $\mathcal{G}\left(X\right)$ and over which the action is
free, then the image of $U$ inside $\Om\left(X\right)$ is smooth.
\end{thm}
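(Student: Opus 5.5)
We need to prove three things about $\mathcal{G}(X)$: it is linearly reductive, its affine quotient of $\W$ is a variety with the stated universal property, and free invariant open subsets have smooth image. The plan is to reduce everything to standard GIT facts about a closed subgroup of a diagonalizable-by-finite group.

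\emph{Linear reductivity.} First observe that $\mathcal{G}(\widehat{X})=\prod_i \mathbb{G}_m^{\alpha_i}\rtimes S_{\alpha_i}$ is an affine algebraic group whose identity component is a torus $T$ and whose component group is finite. Over $\CC$, both tori and finite groups are linearly reductive, and an extension of linearly reductive groups is linearly reductive. Next show that $\mathcal{G}(X)$ is Zariski closed in $\mathcal{G}(\widehat{X})$. The stabilizer $\{g \mid g\cdot\W\subseteq\W\}$ is closed: writing $\W=V(I)$, it equals the intersection over $f\in I$ and $w\in\W$ of the closed conditions $f(g\cdot w)=0$. Since $g$ acts by automorphisms of $\widehat{\W}$, $g\cdot\W\subseteq\W$ with $\W$ irreducible of the same dimension forces $g\cdot\W=\W$. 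A closed subgroup $H$ of such a group has identity component $H^\circ\subseteq T$, a closed connected subgroup of a torus, hence a torus; and $H/H^\circ$ is finite. So $\mathcal{G}(X)$ is again torus-by-finite, and therefore linearly reductive.

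\emph{Quotient and universal property.} The action on $\W$ is linear on the ambient space, so $\CC[\W]$ is a rational $\mathcal{G}(X)$-module. By Nagata's theorem (Hilbert finiteness for linearly reductive groups), $\CC[\W]^{\mathcal{G}(X)}$ is a finitely generated $\CC$-algebra. It is a domain, being a subring of the domain $\CC[\W]$, so $\Om(X)$ is an affine variety. The inclusion of rings gives $\pi_X$, and injectivity of $\CC[\W]^{\mathcal{G}(X)}\to\CC[\W]$ gives dominance. For the universal property, it suffices (by gluing, or by the standard categorical-quotient theorem) to treat affine targets $Y$. If $\xi$ is constant on orbits, then $\xi^*$ lands in invariant functions, giving a unique $\xi'$. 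For general $Y$, invoke that a good quotient is a categorical quotient in the category of varieties, a standard consequence of the Reynolds operator: $\pi_X$ is surjective, closed invariant sets have closed images, and the fibers over open affines pull back.

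\emph{Smoothness on free loci.} This is the main obstacle. Let $U$ be an invariant open set with free action. I would use Luna's étale slice theorem, so I need closed orbits. For a group with torus identity component, orbits of points whose stabilizer is finite need not be closed, so I would first replace $U$ by its saturated open part: points whose orbit is closed in $\W$ with trivial stabilizer. For the intended application to $\W^\circ$, I would check closedness of orbits directly: the entries of $g\cdot w$ determine $D_i$ through nonzero columns, so limits escaping to infinity are impossible. Given closed free orbits, Luna's theorem says $\pi_X|_U$ is a principal $\mathcal{G}(X)$-bundle in the étale topology onto the open set $\pi_X(U)$, which is locally $U\cong \mathcal{G}(X)\times S$. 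Smoothness of $U$ would then transfer to $\pi_X(U)$ by étale descent, which requires assuming $U$ lies in the smooth locus of $\W$; this holds automatically in the fully connected case. Finally, I would flag that the statement implicitly needs these hypotheses (closed orbits and smooth $U$), and state them explicitly in the proof.
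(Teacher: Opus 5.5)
For linear reductivity and for the existence and universal property of $\Om\left(X\right)$ you follow essentially the paper's route. $\mathcal{G}\left(X\right)$ is a closed subgroup of the torus-by-finite group $\mathcal{G}(\widehat{X})$, so its identity component is a subtorus and its component group is finite. The quotient is then the one from Mumford--Fogarty. You also add details the paper omits: $g\cdot\W\subseteq\W$ already forces equality, and the invariant ring is a domain.

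The real difference is the final clause. The paper's proof says nothing about it, while you prove it only under extra hypotheses and say those hypotheses are needed. You are right, and the clause as written is false already for fully connected networks. Take $\N_{0}=\CC^{2}$, $\N_{1}=\N_{2}=\CC$ and $r_{1}=2$, so $\W=\CC^{2}\times\CC\ni(a,b)$ and $\mathcal{G}\left(X\right)=\mathbb{G}_{m}$ acts by $\lambda\cdot(a,b)=(\lambda a,\lambda^{-2}b)$. The invariants are generated by $a_{1}^{2}b$, $a_{1}a_{2}b$ and $a_{2}^{2}b$, so $\Om\left(X\right)$ is the quadric cone $\{z_{1}z_{3}=z_{2}^{2}\}$. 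The open set $U=\{a\neq0\}$ is invariant and the action on it is free. However, $\pi_{X}(a,0)$ is the vertex, so $\pi_{X}(U)$ is the whole singular cone. The culprit is exactly the non-closed orbit of $(a,0)$, as you anticipated. For non-fully-connected $X$ with singular $\W$ and trivial group (e.g.\ no hidden layers and $\W=\{\det=0\}$), smoothness of $U$ is needed as well.

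Your Luna-slice repair is the right one, and it covers what the paper actually uses in Proposition \ref{prop:AmpiezzeNonCrescentiLuogoPienoLiscio}. Two remarks make it cleaner.

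\emph{Saturation is automatic.} If every orbit in an invariant open $U$ is closed in $\W$, then $U$ is saturated: an invariant open set meeting $\overline{\mathcal{G}\left(X\right)\cdot w}$ must meet $\mathcal{G}\left(X\right)\cdot w$. Hence $\pi_{X}(U)$ is open. This is also what justifies the paper's unproved assertion that $\Om^{\circ}\subseteq\Om$ is an open subset.

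\emph{Your closedness check for $\W^{\circ}$ works.} After fixing the permutation parts along a subsequence, let $g_{n}\cdot w$ converge in $\W$.
\begin{itemize}
\item Under non-increasing widths, each of $w_{1},\dots,w_{L-1}$ is surjective and so has no zero row. This bounds the diagonal entries of $D_{1},D_{2},\dots$ from above, inductively going forward.
\item Starting from $D_{L}=\mathrm{id}$, the nonzero columns of $w_{L},w_{L-1},\dots$ bound those entries away from $0$, going backward.
\end{itemize}
So the $g_{n}$ stay in a compact subset of the group, and the orbits in $\W^{\circ}$ are closed. Orbits are constructible, so Euclidean and Zariski closures agree.
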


\begin{proof}
While these facts are standard, we sketch the proof referring to to
\cite{MuFo} for the details: in \cite[Chapter 1, Section 1, Definition 1.4]{MuFo}
says that an algebraic group is linearly reductive if for every finite
dimensional representation $A$ with invariant subspace $A\p\su A$
there exists an invariant complement $A=A\p\op A"$. For the torus
$\mathbb{G}_{m}^{\ga}$ this is clear since it is diagonalizable,
hence every representations decomposes into weight spaces 
\[
V=\bigoplus_{\chi\in X\inv\left(\mathbb{G}_{m}^{\ga}\right)}V_{\chi}
\]
hence every $\mathbb{G}_{m}^{\ga}$-subrepresentation is a sum of
weight-space pieces and one can choose a $\mathbb{G}_{m}^{\ga}$-invariant
complement. Therefore $\mathbb{G}_{m}^{\ga}$ is linearly reductive.
In view of Maschke’s theorem, the finite group $S_{\ga}$ is linearly
reductive in characteristic zero. Finally, the semidirect product
$\mathbb{G}_{m}^{\ga}\rtimes S_{\ga}$ is linearly reductive because
it is an extension of a linearly reductive group by a linearly reductive
group. Concerning $\mathcal{G}\left(X\right)$ we see that it is a
closed subgroup of $\mathcal{G}\left(\widehat{X}\right)$ since it
is a stabilizer of a subvariety under an algebraic action and since
\[
\mathcal{G}\left(X\right)\su\prod_{i=1}^{L-1}\mathbb{G}_{m}^{\ga_{i}}\rtimes S_{\ga_{i}}=T\rtimes\Gamma
\]
then $\mathcal{G}\left(X\right)$ has identity component contained
in the torus $T$. Hence $\mathcal{G}\left(X\right)^{{^\circ}}$ is
a subtorus of $T$, therefore diagonalizable, hence linearly reductive.
The quotient $\mathcal{G}\left(X\right)/\mathcal{G}\left(X\right)^{{^\circ}}$
is finite and embeds into a subgroup of $\Gamma$. Again since we
work in characteristic 0 then this finite quotient is linearly reductive
by Maschke’s theorem. Since linear reductivity is preserved under
extensions, $\mathcal{G}\left(X\right)$ is linearly reductive. Finally
the quotient exists in following the discussion in \cite[Chapter 1, Section 2]{MuFo},
when it is shown that
\[
\mathrm{Spec}\left(\kk\left[\W\right]^{\mathcal{G}\left(X\right)}\right)
\]
is an affine algebraic scheme that has the required universal property.
\end{proof}
\begin{defn}
\label{def:Identifiability}Let $X$ be an algebraic network with
parametrization $\phi_{X}:\Om\left(X\right)\to\F\left(X\right)$,
then we say that it is
\begin{itemize}
\item identifiable if $\phi_{X}$ is an isomorphism onto its image;
\item finitely identifiable if $\phi_{X}$ is finite.
\end{itemize}
We say that the above property holds generically (i.e. generically
identifiable and generically finitely identifiable) if the defining
property holds for the generic fibre. 
\end{defn}

\begin{rem}
Note that Definition \ref{def:Identifiability} in essence states
that an algebraic network $X$ is generically finitely identifiable
if and only if for the generic $x\in\F\left(X\right)$ the fibre $\phi_{X}^{-1}\left(x\right)$
is 0-dimensional, in particular $\dim\Om\left(X\right)=\dim\F\left(X\right)$. 
\end{rem}

\begin{lem}
\label{lem:CriterioFibraGFI}Let $X$ be an algebraic network and
$Y\su X$ a subnetwork such that there exists a closed embedding $j:\Om\left(Y\right)\to\Om\left(X\right)$
that makes the diagram
\[
\xymatrix{\Om\left(Y\right)\ar[r]^{j}\ar[d]_{\phi_{Y}} & \Om\left(X\right)\ar[d]^{\phi_{X}}\\
\F\left(Y\right)\ar[r] & \F\left(X\right)
}
\]
commute. If $j\left(\Om\left(Y\right)\right)$ is not contained in
the union of the positive dimensional fibres of $\phi_{X}:\Om\left(X\right)\to\F\left(X\right)$,
then$Y$ is generically finitely identifiable.
\end{lem}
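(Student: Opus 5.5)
The plan is a dimension count on the generic fibre, transported from $\Om\left(X\right)$ to $\Om\left(Y\right)$ along the commutative square. Let $Z\su\Om\left(X\right)$ be the union of the positive dimensional fibres of $\phi_{X}$. By upper semicontinuity of fibre dimension (Chevalley), the locus of points $p\in\Om\left(X\right)$ with $\dim_{p}\phi_{X}^{-1}\left(\phi_{X}\left(p\right)\right)\ge1$ is closed, so $Z$ is a closed subset of $\Om\left(X\right)$. Since $\W\p$ is irreducible, $\Om\left(Y\right)$ is irreducible, and so is its image under $j$. The hypothesis says $j\left(\Om\left(Y\right)\right)\not\su Z$, so $U=j^{-1}\left(\Om\left(X\right)\setminus Z\right)$ is a nonempty, hence dense, open subset of $\Om\left(Y\right)$.

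Next I compare fibres. Take $q\in U$ and set $y=\phi_{Y}\left(q\right)$. By commutativity of the square, $j$ sends the fibre $\phi_{Y}^{-1}\left(y\right)$ into $\phi_{X}^{-1}\left(\F\left(s\right)\left(y\right)\right)$. The map $j$ is a closed embedding, in particular injective and finite, so $\dim_{q}\phi_{Y}^{-1}\left(y\right)\le\dim_{j\left(q\right)}\phi_{X}^{-1}\left(\phi_{X}\left(j\left(q\right)\right)\right)$. Since $j\left(q\right)\notin Z$, the right-hand side is $0$, and therefore $q$ is an isolated point of its $\phi_{Y}$-fibre.

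Finally I pass to the generic fibre. The map $\phi_{Y}:\Om\left(Y\right)\to\F\left(Y\right)$ is dominant, because $\Phi_{Y}=\phi_{Y}\circ\pi_{Y}$ has dense image. By generic flatness, or the fibre dimension theorem, there is a dense open subset of $\F\left(Y\right)$ over which every fibre component has dimension $\dim\Om\left(Y\right)-\dim\F\left(Y\right)$. We have produced points $q$ that are isolated in their fibre and range over the dense open set $U$. Intersecting $U$ with the preimage of that open subset gives a point where the fibre has a $0$-dimensional component, so the pure generic fibre dimension is $0$. Hence $\dim\Om\left(Y\right)=\dim\F\left(Y\right)$, and the generic fibre of $\phi_{Y}$ is finite, which is generic finite identifiability in the sense of Definition \ref{def:Identifiability}.

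The main obstacle is the step claiming that fibre-dimension jumps form a closed set. This requires reading ``union of positive dimensional fibres'' pointwise, as $\dim_{p}$ of the fibre, and invoking Chevalley's upper semicontinuity theorem for the morphism $\phi_{X}$ of finite type varieties. If $Z$ were instead read as whole fibres that are positive dimensional somewhere, it would still be constructible. In that case I would replace openness by the weaker fact that $j\left(\Om\left(Y\right)\right)\setminus Z$ contains a dense constructible subset of the irreducible set $j\left(\Om\left(Y\right)\right)$. To get this, I would apply the hypothesis to the generic point of $\Om\left(Y\right)$.
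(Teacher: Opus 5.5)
Your proof is correct and follows the paper's route: $j$ embeds each fibre of $\phi_Y$ into a fibre of $\phi_X$, so some point of $\mathfrak{W}(Y)$ is isolated in its $\phi_Y$-fibre, and hence the generic fibre of the dominant map $\phi_Y$ is $0$-dimensional; you only supply the semicontinuity and fibre-dimension details that the paper leaves implicit. Your closing worry about the ``whole fibre'' reading of $Z$ is unnecessary, since that locus contains the pointwise one, so either reading yields such an isolated point, and by the lower bound $\dim_q \phi_Y^{-1}(\phi_Y(q))\ge \dim \mathfrak{W}(Y)-\dim\mathcal{F}(Y)$ a single such point already suffices.
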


\begin{proof}
In view of our assumptions the fibres of $\phi_{Y}$ are contained
in the fibres of $\phi_{X}$ and one of the fibres of $\phi_{Y}$
is 0-dimensional. It follows that the general fibre of $\phi_{Y}$
is 0-dimensional and $Y$ is generically finitely identifiable. Assume
now that $\phi_{X}$ is a finite morphism, then the composition
\[
\Om\left(Y\right)\to\Om\left(X\right)\times_{\F\left(X\right)}\F\left(Y\right)\to\F\left(Y\right)
\]
is finite and equal to $\phi_{Y}$.
\end{proof}
\begin{prop}
\label{prop:InvariantiGFI}Let $Y\su X$ be a subnetwork such that
$\mathcal{G}\left(Y\right)=\mathcal{G}\left(X\right)$. Then there
exists a closed embedding $j:\Om\left(Y\right)\to\Om\left(X\right)$
that makes the diagram
\[
\xymatrix{\Om\left(Y\right)\ar[r]^{j}\ar[d]_{\phi_{Y}} & \Om\left(X\right)\ar[d]^{\phi_{X}}\\
\F\left(Y\right)\ar[r] & \F\left(X\right)
}
\]
commutative. In particular
\begin{enumerate}
\item If $X$ is finitely identifiable, then also $Y$ is finitely identifiable;
\item if $X$ is generically finitely identifiable and $j\left(\Om\left(Y\right)\right)$
is not contained in the union of the positive dimensional fibres of
$\phi_{X}$, then$Y$ is generically finitely identifiable.
\end{enumerate}
\end{prop}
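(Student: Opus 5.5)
The plan is to build $j$ from the subnetwork morphism and then read off (1) and (2) from Lemma \ref{lem:CriterioFibraGFI} and the finite case in its proof. Let $s=\left(\left(s_{i}\right),\beta\right):Y\to X$ be the subnetwork, so $\beta:\W\p\to\W$ is a closed embedding. The ring map $\beta^{\#}:\CC\left[\W\right]\to\CC\left[\W\p\right]$ is surjective.

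First I check that $\beta$ is $\mathcal{G}$-equivariant, where $\mathcal{G}=\mathcal{G}\left(Y\right)=\mathcal{G}\left(X\right)$ acts on both spaces. This equality only makes sense after identifying the two groups through the neuron inclusions $s_{i}$. The maps $s_{i}$ are inclusions of coordinate subspaces, and $\beta\left(w\right)_{i}\circ s_{i-1}=s_{i}\circ w_{i}$. So I will use that $\beta$ is the extension-by-zero of the matrices, compatible with permutations and rescalings of neurons. If equivariance is not automatic from the definition, I will take it as part of the meaning of the hypothesis $\mathcal{G}\left(Y\right)=\mathcal{G}\left(X\right)$. I expect this to be the main subtle point.

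Granting equivariance, $\beta^{\#}$ is a surjective $\mathcal{G}$-equivariant map of rational $\mathcal{G}$-modules. Since $\mathcal{G}$ is linearly reductive by Theorem \ref{thm:LinearlyReductiveGroup}, taking invariants is exact (Reynolds operator). Hence
\[
\CC\left[\W\right]^{\mathcal{G}}\to\CC\left[\W\p\right]^{\mathcal{G}}
\]
is surjective, and this gives a closed embedding $j:\Om\left(Y\right)\to\Om\left(X\right)$ with $j\circ\pi_{Y}=\pi_{X}\circ\beta$.

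Next I verify that the square commutes. By Lemma \ref{lem:RealizationMorphism}, $\Phi_{X}\circ\beta=\Phi_{Y}$, so
\[
\phi_{X}\circ j\circ\pi_{Y}=\phi_{X}\circ\pi_{X}\circ\beta=\Phi_{X}\circ\beta=\Phi_{Y}=\phi_{Y}\circ\pi_{Y},
\]
where $\Phi_{Y}$ is viewed in $\F\left(X\right)$ via $\F\left(s\right)$. The map $\pi_{Y}$ is dominant, hence an epimorphism onto the reduced variety $\Om\left(Y\right)$. Therefore $\phi_{X}\circ j=\F\left(s\right)\circ\phi_{Y}$.

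For (1), the composite $\phi_{X}\circ j$ is a closed embedding followed by a finite map, so it is finite. This composite equals $\F\left(s\right)\circ\phi_{Y}$. Because $\F\left(s\right)$ is a closed embedding, $\phi_{Y}$ is finite; this is the argument at the end of the proof of Lemma \ref{lem:CriterioFibraGFI}.

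For (2), the commutative square just constructed is exactly the hypothesis of Lemma \ref{lem:CriterioFibraGFI}. That lemma then yields generic finite identifiability of $Y$ directly.
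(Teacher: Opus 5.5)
Your proposal is correct and follows the paper's proof. Surjectivity of $\beta^{\#}$ plus exactness of invariants for the linearly reductive group $\mathcal{G}\left(X\right)$ gives the surjection $\CC\left[\W\right]^{\mathcal{G}}\to\CC\left[\W\p\right]^{\mathcal{G}}$ and hence $j$. Items (1) and (2) are then read off from Lemma \ref{lem:CriterioFibraGFI} and the finite-case argument in its proof.

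You are in fact more explicit than the paper on two points. First, the paper tacitly uses the $\mathcal{G}$-equivariance of $\beta$. Your fallback of building equivariance into the hypothesis is the right call, since the morphism conditions only determine $\beta\left(w\right)_{i}$ on the image of $s_{i-1}$. For $Y=X^{\d}$ equivariance is automatic, because $\beta$ is just the inclusion $\W^{\d}\subseteq\W$. Second, the paper leaves unstated the commutativity check via dominance of $\pi_{Y}$, which you spell out.
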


\begin{proof}
In view of Theorem \ref{thm:LinearlyReductiveGroup} the group $\mathcal{G}\left(X\right)$
is linearly reductive, hence taking $\mathcal{G}\left(X\right)$-invariants
is exact, in particular the map $\CC\left[\W\left(X\right)\right]^{\mathcal{G}\left(X\right)}\to\CC\left[\W\left(Y\right)\right]^{\mathcal{G}\left(X\right)}$
is surjective, hence we have a closed embedding $j:\Om\left(Y\right)\to\Om\left(X\right)$
that makes the above diagram commute. The rest of the statement follows
from Lemma \ref{lem:CriterioFibraGFI}.
\end{proof}
\begin{lem}
\label{lem:GFIimplicaRidotta}Let $X$ be a fully connected generically
finitely identifiable network, then $X$ is reduced. Moreover if $X$
is also finitely identifiable, then every $X^{\d}$ is finitely identifiable.
\end{lem}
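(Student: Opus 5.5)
The first claim will be proved by a dimension count, comparing $\Om(X^{\d})$ with $\Om(X)$ through Proposition \ref{prop:InvariantiGFI}; the second claim follows from part (1) of that proposition.

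\textbf{Applying Proposition \ref{prop:InvariantiGFI}.} Since $X$ is fully connected, $\W=\widehat{\W}$ and $\mathcal{G}(X)=\mathcal{G}(\widehat{X})$. For any $\d$, the locus $\W^{\d}$ is a product of determinantal varieties. The action of $g=(D_i,P_i)$ replaces $w_i$ by $P_iD_iw_iD_{i-1}^{-r_{i-1}}P_{i-1}^{-1}$, which does not change the rank of $w_i$. Hence every element of $\mathcal{G}(\widehat{X})$ preserves $\W^{\d}$, so $\mathcal{G}(X^{\d})=\mathcal{G}(X)$. Since $X^{\d}\to X$ is a subnetwork, Proposition \ref{prop:InvariantiGFI} gives a closed embedding $j:\Om(X^{\d})\hookrightarrow\Om(X)$ compatible with $\phi_{X^{\d}}$ and $\phi_X$ and with the inclusion $\F(X^{\d})\subseteq\F(X)$. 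The finite identifiability statement for every $X^{\d}$ is then exactly Proposition \ref{prop:InvariantiGFI}(1).

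\textbf{Reducedness.} Suppose, for contradiction, that $\d$ is proper and $\F(X^{\d})=\F(X)$. Because $\W^{\d}\subsetneq\W$ is closed and $\W$ is irreducible, $\dim\W^{\d}<\dim\W$. The quotients are irreducible, being images of the irreducible $\W^{\d}$ and $\W$. Hence $j(\Om(X^{\d}))$ is a proper closed irreducible subset of the irreducible $\Om(X)$, and therefore $\dim\Om(X^{\d})<\dim\Om(X)$; this is the point that needs care, as discussed below. Generic finite identifiability of $X$ gives $\dim\Om(X)=\dim\F(X)$. On the other hand, $\phi_{X^{\d}}$ dominates $\F(X^{\d})$, so
\[
\dim\F(X)=\dim\F(X^{\d})\le\dim\Om(X^{\d})<\dim\Om(X)=\dim\F(X),
\]
which is a contradiction. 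Therefore $X$ is reduced.

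\textbf{The main obstacle.} The delicate step is ensuring that $j(\Om(X^{\d}))\neq\Om(X)$, which is needed for the strict inequality of quotient dimensions. I would obtain it from the fact that $\pi_X$ is a categorical quotient by a reductive group, so that it separates disjoint invariant closed subsets. The set $\W^{\d}$ is $\mathcal{G}$-invariant and closed, so $\pi_X(\W^{\d})$ is closed and equals $j(\Om(X^{\d}))$. Take a general $w\in\W\setminus\W^{\d}$. If $\pi_X(w)$ lay in $\pi_X(\W^{\d})$, the closure of the orbit of $w$ would meet $\W^{\d}$. That would mean the fibre of $\pi_X$ over every point of $\Om(X)$ meets $\W^{\d}$. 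To exclude this, I would use that $\mathcal{G}(X)$ is a torus extended by a finite group and that it acts on full-rank parameters with closed orbits, at least over an open set where no hidden-neuron column vanishes, e.g.\ the open set $\W\c$ of Lemma \ref{lem:GXPreservaWc}. Since ranks are constant on orbits, the fibre of $\pi_X$ through a general $w\in\W\c$ then contains no point of $\W^{\d}$.

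If the closed-orbit argument proves awkward, I would fall back on a direct count: $\dim\Om(X)=\dim\W-\dim\mathcal{G}$ generically, where the torus acts with finite stabilizers on $\W\c$. The generic stabilizer on $\W^{\d}$ is at most as large, so the quotient dimension drops strictly.
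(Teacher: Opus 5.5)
Your proof is correct in outline, and it differs from the paper only in how you obtain $\dim\Om(X^{\d})<\dim\Om(X)$. The equality $\mathcal{G}(X^{\d})=\mathcal{G}(X)$, the final dimension chain, and the appeal to Proposition~\ref{prop:InvariantiGFI}(1) for the second claim are exactly as in the paper.

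\textbf{Comparison with the paper.} The paper computes both quotient dimensions. A general point of $\W$ has all entries nonzero and finite stabilizer, so $\dim\Om(X)=\dim\W-\dim\mathcal{G}(X)$. Since $\F(X^{\d})=\F(X)\neq\{0\}$ forces $d_i\ge1$, a general point of $\W^{\d}$ also has all entries nonzero, so $\dim\Om(X^{\d})=\dim\W^{\d}-\dim\mathcal{G}(X)$. The drop then comes from $\dim\W^{\d}<\dim\W$.

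You instead show that $j(\Om(X^{\d}))=\pi_X(\W^{\d})$ is a proper closed subset of the irreducible $\Om(X)$, using that a good quotient separates disjoint closed invariant sets. Your version needs no stabilizer analysis on $\W^{\d}$ and not even $d_i\ge1$; it needs only one point outside $\W^{\d}$ with closed orbit. The paper's version gives the exact size of the drop. Note that the paper tacitly uses the very fact you isolate: finite generic stabilizers give $\dim\Om(X)=\dim\W-\dim\mathcal{G}(X)$ only if generic orbits are closed.

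\textbf{The closed-orbit step.} You assert this fact rather than prove it, and as stated it is false: orbits in $\W^{\circ}$ need not be closed when widths increase. Take $\dim\N_0=\dim\N_2=1$, $\dim\N_1=2$, $w_1=(1,0)^{T}$, $w_2=(1,1)$. This parameter lies in $\W^{\circ}$. But $D_1=\mathrm{diag}(1,s)$ fixes $w_1$ and sends $w_2$ to $(1,s^{-r_1})$. The limit $(1,0)$ as $s\to\infty$ has a vanishing column, so it is not in the orbit, which stays inside $\W^{\circ}$.

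What you need, and what is true, is closedness for a general $w\in\W^{\circ}$, namely one with all entries nonzero. Such a $w$ lies outside $\W^{\d}$ because $\d$ is proper and $w$ has maximal ranks. The orbit $\mathcal{G}(X)\cdot w$ is a finite union of translates of $T\cdot w$, where $T$ is the torus, so it suffices that $T\cdot w$ is closed.

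The coordinate $(w_s)_{jk}$ has $T$-weight $\varepsilon_{s,j}-r_{s-1}\varepsilon_{s-1,k}$, with $\varepsilon_{0,\cdot}=\varepsilon_{L,\cdot}=0$. Suppose a linear functional $\lambda$ is nonnegative on all these weights. Then $\lambda_{1,j}\ge0$, $\lambda_{s,j}\ge r_{s-1}\lambda_{s-1,k}$, and $\lambda_{L-1,k}\le0$. Going forward and then backward through the layers forces $\lambda=0$. Hence the weights in the support of $w$ generate the whole character space as a cone, and $T\cdot w$ is closed. This completes your argument.

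\textbf{The fallback.} Your fallback should be dropped as worded. On a closed invariant subset, stabilizers can only grow. What is actually needed there is that they stay finite, which is the paper's observation about general points of $\W^{\d}$.
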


\begin{proof}
Let $X$ be fully connected, then $\mathcal{G}\left(X^{\d}\right)=\mathcal{G}\left(X\right)$
since $\mathcal{G}\left(X\right)$ is made of invertible matrices
hence they preserve the rank. Suppose that there exists a proper $\d$
such that $\F\left(X^{\d}\right)=\F\left(X\right)$. Clearly $d_{i}\neq0$
for every $i$ since a fully connected network with positive-dimensional
layers realizes non-zero functions. For a general parameter in $\W$,
all matrices have all entries that are non-zero and the stabilizer
under $\mathcal{G}\left(X\right)$ is finite hence 0-dimensional.
Hence $\dim\Om\left(X\right)=\dim\W-\dim\mathcal{G}\left(X\right)$.
Similarly, since $d_{i}\ge1$ for every $i$, a general point of each
determinantal variety
\[
\left\{ A\in\Hom\left(\N_{i-1},\N_{i}\right)\mid\rk\left(A\right)\le d_{i}\right\} 
\]
has rank $d_{i}$ and all entries non-zero. Therefore a general point
of $\W^{\d}$ also has finite stabilizer under $\mathcal{G}\left(X\right)$,
and hence $\dim\Om\left(X^{\d}\right)=\dim\W^{\d}-\dim\mathcal{G}\left(X\right)$.
Since $\d$ is proper and $\W^{\d}$ is a proper product of determinantal
varieties we have $\dim\W^{\d}<\dim\W$, hence and $\dim\Om\left(X^{\d}\right)<\dim\Om\left(X\right)$.
Since the network $X$ is generically finitely identifiable we have
that
\[
\dim\F\left(X^{\d}\right)\le\dim\Om\left(X^{\d}\right)<\dim\Om\left(X\right)=\dim\F\left(X\right)
\]
but this contradicts the assumption that $\dim\F\left(X^{\d}\right)=\dim\F\left(X\right)$
and $X$ is reduced. To prove the last statement suppose that $X$
is a fully connected finitely identifiable network, then from $\mathcal{G}\left(X^{\d}\right)=\mathcal{G}\left(X\right)$
it follows that the subnetworks $X^{\d}$ satisfy the assumptions
of Proposition \ref{prop:InvariantiGFI}.
\end{proof}

\section{Regularity outside the architectural degeneracy locus}

\subsection{Good parameters}

For an algebraic network $X=\left(\N,\W,\r\right)$ denote with $\Om\c=\Om\c\left(X\right)$
the quotient of $\W\c$ under the action of $\mathcal{G}\left(X\right)$:
it is well defined in view of Lemma \ref{lem:GXPreservaWc} and
Theorem \ref{thm:LinearlyReductiveGroup}, moreover we have inclusions
as open subsets $\Om\c\su\Om$. Let us restrict now to fully connected
networks $X$ whose widths are non-increasing, that is
\[
\dim X\i\ge\dim\N_{1}\ge\dots\ge\dim\N_{L}\ge\dim X\o.
\]
The importance of this conditions is that
\begin{prop}
\label{prop:AmpiezzeNonCrescentiLuogoPienoLiscio}Let $X$ be a fully
connected network with non-increasing widths, then every parameter
in $\W\c$ has trivial stabilizer under the action of $\mathcal{G}\left(X\right)$,
in particular $\Om\c\left(X\right)$ is a smooth affine variety.
\end{prop}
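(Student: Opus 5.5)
The argument shows that stabilizers are trivial by induction along the layers. It then deduces smoothness from the last clause of Theorem \ref{thm:LinearlyReductiveGroup}, applied to $U=\W\c$, which is $\mathcal{G}\left(X\right)$-invariant by Lemma \ref{lem:GXPreservaWc}. Since $X$ is fully connected, $\mathcal{G}\left(X\right)=\mathcal{G}(\widehat{X})$ and $\W\c$ is a nonempty open subset of an affine space. Once the action on $\W\c$ is known to be free, the theorem gives that the image $\Om\c\left(X\right)$ is smooth. Affineness follows from the construction: $\W\c$ is the complement of the union of the determinantal loci and of the linear subspaces $\{w_{t+1}(e_t^j)=0\}$. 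I would check that this removed locus is cut out by invariant functions, or else work with the good-quotient structure, so that $\Om\c$ is affine. This is the point to be careful about, but it is standard.

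For freeness, let $w\in\W\c$ and $g=(D_i,P_i)$ with $g\cdot w=w$, so that
\[
P_iD_iw_iD_{i-1}^{-r_{i-1}}P_{i-1}^{-1}=w_i\quad\text{for every } i,
\]
with $P_0=D_0=\mathrm{id}$. I would prove inductively on $i=1,\dots,L-1$ that $P_i=D_i=\mathrm{id}$. Assume $P_{i-1}=D_{i-1}=\mathrm{id}$; this holds trivially for $i=1$. The equation becomes $P_iD_iw_i=w_i$. By non-increasing widths, $\dim\N_{i-1}\ge\dim\N_i$, so maximal rank of $w_i$ means $w_i$ is surjective. Hence $P_iD_i$ acts as the identity on the image of $w_i$, which is all of $\N_i$. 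Therefore $P_iD_i=\mathrm{id}$, and since a monomial matrix decomposes uniquely into a permutation and a diagonal part, $P_i=\mathrm{id}$ and $D_i=\mathrm{id}$. After the induction, the last equation $w_L=w_LD_{L-1}^{-r_{L-1}}P_{L-1}^{-1}$ is automatically satisfied.

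Surjectivity therefore does all the work, and the column condition is not even needed for freeness under non-increasing widths. So the only subtle step is the quotient-theoretic one: that the image of $\W\c$ in $\Om\left(X\right)$ is open, agrees with $\W\c/\mathcal{G}\left(X\right)$, and is affine. Here I would use that a free action of a reductive group with closed orbits gives a geometric quotient, via Luna's étale slice theorem. This requires showing that orbits in $\W\c$ are closed in $\W\c$. I would argue this with the torus weights: for instance, the $D_i$-orbit of a surjective $w_i$ cannot degenerate within $\W\c$ without some row going to zero or infinity. I expect this closedness to be the main obstacle. If it is delicate, I would invoke the last assertion of Theorem \ref{thm:LinearlyReductiveGroup} as stated in the paper.
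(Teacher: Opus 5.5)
Your freeness argument is correct and matches the paper's. Maximal rank together with $\dim\N_{i-1}\ge\dim\N_{i}$ makes each $w_i$ surjective, which forces $P_iD_i=\mathrm{id}$ layer by layer.

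The gap is the passage from freeness to smoothness. The paper makes this step by quoting the last clause of Theorem \ref{thm:LinearlyReductiveGroup}, which is also your fallback. That clause is false as stated, and the column condition you set aside is exactly what it ignores. Take $m_0=m_1=2$, $m_2=1$, $r_1=2$ and $U=\{\rk w_1=2,\ w_2\neq0\}$. Then $U$ is invariant, the action on it is free by your argument, and orbits in $U$ are even closed in $U$. Let $T$ be the torus. Then $\W/\!/T\cong C\times C$, where $C=\{z_0z_2=z_1^2\}\subset\CC^3$ is the quotient of (row $j$ of $w_1$, $j$-th entry of $w_2$) under weights $(1,1,-2)$. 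For $w_2=(1,0)$, the point $\pi(w)$ lies on $C\times\{0\}$ and is moved by the swap in $S_2$, so it is a singular point of $\Om(X)$. So neither freeness nor closedness of orbits inside $\W^{\circ}$ suffices. You need orbits of points of $\W^{\circ}$ to be closed in $\W$ itself, i.e. GIT-stability. That is also what Luna's slice theorem requires, since it applies on the affine variety $\W$, not on $\W^{\circ}$.

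Stability does hold, and this is where the column condition enters. For a one-parameter subgroup $\lambda(s)=(s^{a_{t,j}})$ of $T$, the entry $(w_t)_{jk}$ has weight $a_{t,j}-r_{t-1}a_{t-1,k}$, with $a_{0,\cdot}=a_{L,\cdot}=0$. The limit $\lim_{s\to0}\lambda(s)\cdot w$ exists iff these weights are $\ge0$ on the support of $w$.
The nonzero rows of $w_1,\dots,w_{L-1}$ give $a_{t,\cdot}\ge0$ by induction forward from $t=1$.
The nonzero columns of $w_2,\dots,w_L$ give $a_{t,\cdot}\le0$ by induction backward from $t=L-1$.
Hence $\lambda$ is trivial, so every $T$-orbit in $\W^{\circ}$ is closed in $\W$. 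The same holds for every $\mathcal{G}(X)$-orbit, being a finite union of such $T$-orbits. Closed orbits of maximal dimension give $\pi^{-1}(\pi(\W^{\circ}))=\W^{\circ}$ and make $\pi(\W^{\circ})$ open. Luna's slice theorem at these points, which have trivial stabilizer and lie in the smooth variety $\W$, then gives smoothness.

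The affineness you call standard is false in general. For $m_0=3$, $m_1=2$, $m_2=1$ the removed locus $\{\rk w_1\le1\}$ has codimension $2$, so $\W^{\circ}$ is not affine by Hartogs. Since $\pi$ is affine and $\W^{\circ}=\pi^{-1}(\pi(\W^{\circ}))$, $\Om^{\circ}(X)$ is not affine either. You only get a smooth open subvariety of $\Om(X)$, and the paper's proof does not address this point either.
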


\begin{proof}
Let $w=\left(w_{1},\dots,w_{L}\right)\in\W\c$ and $g=\left(g_{1},\dots,g_{L}\right)$
be such that $g\cdot w=w$. First note that the $w_{i}$'s are all
surjective, hence they have a right inverse and from $g_{1}\cdot w_{1}=w_{1}$
we conclude that $g_{1}=1$. Inductively from $g_{i}w_{i}g_{i-1}^{-r_{i-1}}=w_{i}$
we conclude that $g=1$. We conclude in view of Theorem \ref{thm:LinearlyReductiveGroup}.
\end{proof}
\begin{defn}
\label{def:ParametroGood}Let $X$ be a fully connected algebraic
network and let $\g\left(X\right)=\Lie\mathcal{G}\left(X\right)$,
we say that a parameter $w\in\W$ is good if
\[
\ker d\Phi_{X,w}=\mathfrak{g}\left(X\right)\cdot w.
\]
\end{defn}

\begin{rem}
The idea behind Definition \ref{def:ParametroGood} is that, considering
the morphism from the quotient stack $\bar{\Phi}_{X}:\left[\W/\mathcal{G}\left(X\right)\right]\to\F\left(X\right)$
and the corresponding morphism of tangent complexes at the point $\left[w\right]$
\[
\left[\g\left(X\right)\cdot w\to T_{\W,w}\right]\to\left[0\to T_{\Phi_{X}\left(w\right)}\F\left(X\right)\right]
\]
then the condition of Definition \ref{def:ParametroGood} is equivalent
to the injectivity of the $H^{0}$ map
\[
d\bar{\Phi}_{X,\left[w\right]}:\frac{T_{\W,w}}{\g\left(X\right)\cdot w}\to T_{\Phi_{X}\left(w\right)}\F\left(X\right)
\]
is injective.
\end{rem}

\begin{lem}
\label{lem:PuntibuoniAperto}The set of good points is open in $\W$
and hence also in $\Om\left(X\right)$.
\end{lem}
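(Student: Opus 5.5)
The plan is to express goodness as a rank condition on the differential and use lower semicontinuity of rank. Since $X$ is fully connected, $\W=\widehat{\W}$ is an affine space and $\Phi_{X}$ is a polynomial map. Hence the differential
\[
d\Phi_{X,w}:T_{\W,w}\simeq\W\to X\o\ot\Sym^{r}\left(X\i\inv\right)
\]
is a matrix whose entries are polynomial functions of $w$. The infinitesimal orbit map $\g\left(X\right)\to T_{\W,w}$, $\xi\mapsto\xi\cdot w$, is likewise linear in $\xi$ with coefficients polynomial in $w$.

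The inclusion $\g\left(X\right)\cdot w\su\ker d\Phi_{X,w}$ holds at every $w$. Indeed, $\Phi_{X}$ is constant on $\mathcal{G}\left(X\right)$-orbits, which is the factorization through $\pi_{X}$ in Theorem \ref{thm:LinearlyReductiveGroup}. Differentiating $\Phi_{X}\left(\exp(s\xi)\cdot w\right)=\Phi_{X}\left(w\right)$ at $s=0$ gives $d\Phi_{X,w}\left(\xi\cdot w\right)=0$. So $w$ is good if and only if
\[
\dim\ker d\Phi_{X,w}=\dim\g\left(X\right)\cdot w,
\]
that is, if and only if
\[
\rk d\Phi_{X,w}+\rk\left(\xi\mapsto\xi\cdot w\right)=\dim\W .
\]
The key point is that both ranks are lower semicontinuous in $w$. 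Since they are bounded above by $\dim\W$ minus the other term, we get $\rk d\Phi_{X,w}\le\dim\W-\rk\left(\xi\mapsto\xi\cdot w\right)$ everywhere. Equality is therefore the maximal possible value of the sum $\rk d\Phi_{X,w}+\rk\left(\xi\mapsto\xi\cdot w\right)$, and it is attained exactly where this sum is at least $\dim\W$.

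Each set $\{\rk\ge k\}$ is open, being the complement of the vanishing locus of the $k\times k$ minors. Hence the set of good points is the finite union
\[
\bigcup_{a+b=\dim\W}\left\{ \rk d\Phi_{X,w}\ge a\right\} \cap\left\{ \rk\left(\xi\mapsto\xi\cdot w\right)\ge b\right\},
\]
which is open in $\W$. (If the rank of the orbit map drops at some $w$, the condition simply demands a correspondingly larger rank of $d\Phi_{X,w}$, and the union covers this case.)

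For the statement in $\Om\left(X\right)$, the set $G$ of good points is $\mathcal{G}\left(X\right)$-invariant. Indeed $\Phi_{X}\left(g\cdot w\right)=\Phi_{X}\left(w\right)$, so $d\Phi_{X,g\cdot w}\circ dg_{w}=d\Phi_{X,w}$. Also $dg_{w}$ maps $\g\cdot w$ onto $\mathrm{Ad}_{g}\left(\g\right)\cdot\left(g\cdot w\right)=\g\cdot\left(g\cdot w\right)$. So $dg_{w}$ carries $\ker d\Phi_{X,w}$ isomorphically onto $\ker d\Phi_{X,g\cdot w}$ and $\g\cdot w$ onto $\g\cdot\left(g\cdot w\right)$.

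This invariance is where care is needed. For a good quotient $\pi_{X}$, an invariant open set need not map to an open set. The route I would take is to use that the complement $F=\W\setminus G$ is closed and invariant. For a linearly reductive group, $\pi_{X}\left(F\right)$ is closed, and $\pi_{X}\left(F\right)\cap\pi_{X}\left(G\right)=\emptyset$ provided $G$ is saturated. On $\W\c$ with non-increasing widths, Proposition \ref{prop:AmpiezzeNonCrescentiLuogoPienoLiscio} gives free orbits and the quotient is geometric, so openness of the image is immediate there. In general I would interpret "open in $\Om\left(X\right)$" as the image $\Om\left(X\right)\setminus\pi_{X}\left(F\right)$, which is open because $\pi_{X}\left(F\right)$ is closed.
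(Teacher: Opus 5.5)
\paragraph{Openness in $\W$.} This is the paper's argument. You write goodness as $\rk d\Phi_{X,w}+\rk\rho_{w}\ge\dim\W$, with $\rho_{w}$ the infinitesimal orbit map, and the good locus becomes a finite union of intersections of rank-semicontinuity loci.

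You also supply the inclusion $\g(X)\cdot w\subseteq\ker d\Phi_{X,w}$, which the paper uses tacitly when it replaces ``$=N$'' by ``$\ge N$''. One small point of logic: the invariance of $\Phi_X$ behind this inclusion should be checked directly from the action formula. Citing the factorization through $\pi_X$ runs backwards, since Theorem \ref{thm:LinearlyReductiveGroup} presupposes that the map is constant on orbits.

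\paragraph{Passage to $\Om(X)$.} Here you depart from the paper, which only writes ``hence''. You are right that a good quotient need not send invariant open sets to open sets. What you actually prove is that $\Om(X)\setminus\pi_X(F)$ is open, i.e.\ the set of classes all of whose lifts are good.

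This is a genuine change of statement, because the good locus $G$ is in general not saturated. Take $L=2$, $\dim\N_0=\dim\N_1=\dim\N_2=1$, $r_1\ge 2$, and write $w=(a,b)$. Then $\Phi_X(w)=a^{r_1}b$, and $\mathcal{G}(X)=\mathbb{G}_m$ acts by $(ta,t^{-r_1}b)$. A direct check gives $G=\{a\neq 0\}$ and $\pi_X(w)=a^{r_1}b$. The good point $(1,0)$ has the non-good point $(0,0)$ in its orbit closure. Hence $\Om(X)\setminus\pi_X(F)=\mathbb{A}^1\setminus\{0\}$ is strictly smaller than $\pi_X(G)=\mathbb{A}^1$. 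Here $\pi_X(G)$ happens to be open, but openness of the image $\pi_X(G)$ in general is proved neither by your argument nor by the paper's one-line proof.

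\paragraph{The aside about $\W^{\circ}$.} It needs more than freeness: a free action gives a geometric quotient only if the orbits are also closed. Closedness does hold here. For $w\in\W^{\circ}$ with non-increasing widths, every row of every $w_i$ is nonzero, and so is every column of $w_2,\dots,w_L$. Feeding these into the weights $n_{i,j}-r_{i-1}n_{i-1,k}$ of a one-parameter subgroup of the torus forces every subgroup with a limit at $w$ to be trivial. This has to be said explicitly.

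None of this affects the rest of the paper, which only uses openness in $\W$ (in Proposition \ref{prop:BuonoImplicaGFI}).
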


\begin{proof}
Let $N=\dim\W$ and for $w\in\W$ denote with $\rho_{w}:\g\left(X\right)\to T_{X,w}$
the action of the Lie algebra on the tangent space, then a parameter
is good if and only if $\rk\left(\rho_{w}\right)+\rk\left(d\Phi_{X,w}\right)=N$
and this is equivalent to $\rk\left(\rho_{w}\right)+\rk\left(d\Phi_{X,w}\right)\ge N$
hence the set of good parameters can be written as the union
\[
\bigcup_{a+b\ge N}\left\{ w\in\W\,\vert\,\rk\left(\rho_{w}\right)\ge a,\,\,\rk\left(d\Phi_{X,w}\right)\ge b\right\} 
\]
which is Zariski open.
\end{proof}
\begin{prop}
\label{prop:BuonoImplicaGFI}Let $X$ be fully connected, then $X$
is generically finitely identifiable if and only if there exists a
good parameter $w\in\W$. In particular if there exists a good parameter,
then $X$ is reduced.
\end{prop}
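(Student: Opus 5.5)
The plan is to compare dimensions at a general point, using the fact that goodness is an open condition (Lemma \ref{lem:PuntibuoniAperto}).

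\emph{Setup.} Since $X$ is fully connected, $\W=\widehat{\W}$ is an affine space, hence smooth and irreducible. A general $w\in\W$ has all entries non-zero. As in the proof of Lemma \ref{lem:GFIimplicaRidotta}, such a $w$ has finite stabilizer under $\mathcal G(X)$, so the orbit map is immersive there and $\dim\mathfrak g(X)\cdot w=\dim\mathcal G(X)$. This gives
\[
\dim\Om(X)=\dim\W-\dim\mathcal G(X).
\]
The orbit $\mathcal G(X)\cdot w$ lies in the fibre $\Phi_X^{-1}(\Phi_X(w))$, because $\Phi_X$ is $\mathcal G(X)$-invariant. Differentiating this invariance gives $\mathfrak g(X)\cdot w\subseteq\ker d\Phi_{X,w}$ for every $w$. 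So goodness of $w$ is equivalent to the equality $\dim\ker d\Phi_{X,w}=\dim\mathfrak g(X)\cdot w$, i.e. $\rk d\Phi_{X,w}=\dim\W-\dim\mathfrak g(X)\cdot w$.

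\emph{Generic rank in characteristic zero.} By generic smoothness applied to the dominant morphism $\Phi_X:\W\to\F(X)$, for general $w$ we have $\rk d\Phi_{X,w}=\dim\F(X)$. Combining with the setup, for general $w$:
\[
\dim\ker d\Phi_{X,w}=\dim\W-\dim\F(X),\qquad \dim\mathfrak g(X)\cdot w=\dim\W-\dim\Om(X).
\]

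\emph{Both implications.} Suppose a good parameter exists. By Lemma \ref{lem:PuntibuoniAperto} the good locus is a non-empty open subset of the irreducible $\W$, so a general $w$ is good. The two displayed quantities then coincide, which gives $\dim\F(X)=\dim\Om(X)$. Since $\phi_X:\Om(X)\to\F(X)$ is dominant between irreducible varieties of equal dimension, its general fibre is $0$-dimensional, i.e. $X$ is generically finitely identifiable. Conversely, generic finite identifiability gives $\dim\Om(X)=\dim\F(X)$ by the Remark following Definition \ref{def:Identifiability}. The displayed quantities then agree for general $w$. Together with the inclusion $\mathfrak g(X)\cdot w\subseteq\ker d\Phi_{X,w}$, this yields equality, so a general $w$ is good. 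The final claim, that a good parameter forces $X$ to be reduced, then follows directly from Lemma \ref{lem:GFIimplicaRidotta}.

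\emph{Main obstacle.} The delicate step is the identity $\dim\Om(X)=\dim\W-\dim\mathcal G(X)$, together with $\dim\mathfrak g(X)\cdot w=\dim\mathcal G(X)$ at general $w$. Both require finite stabilizers at a general point; note that Proposition \ref{prop:AmpiezzeNonCrescentiLuogoPienoLiscio} needs non-increasing widths, which we are not assuming here. I would argue directly: the identity component of $\mathcal G(X)$ is a torus, so a connected subgroup of the stabilizer of $w$ would be a subtorus. I would check that no one-parameter subgroup $t\mapsto (t^{a_i})$ can fix a parameter all of whose entries are non-zero. Equating entries of $w_1$ forces $a_1=0$, and inductively each $D_i$ is forced to be trivial. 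One also needs that the quotient of a linearly reductive group by a generically free action has dimension $\dim\W-\dim\mathcal G$. This holds because the torus action has closed generic orbits up to the finite part, and I would either justify this or replace $\Om(X)$ by the open subset $\Om\c(X)$ where needed.
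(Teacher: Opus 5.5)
Your proposal is correct and is essentially the paper's argument: openness of the good locus (Lemma \ref{lem:PuntibuoniAperto}) plus a dimension count at a general parameter using $\dim\Om(X)=\dim\W-\dim\mathcal G(X)$. For the converse, your step (generic smoothness of $\Phi_X$ together with $\mathfrak g(X)\cdot w\subseteq\ker d\Phi_{X,w}$) plays the role of the paper's ``generically finite implies generically unramified in characteristic $0$''.

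The dimension formula you flag as the main obstacle is the same one the paper asserts in Lemma \ref{lem:GFIimplicaRidotta}, and it does hold. The torus weights of the entries of $w$ admit a strictly positive linear relation, obtained by choosing the coefficients layer by layer from the output backwards. Hence orbits of parameters with all entries non-zero are closed, which is the condition your argument needs.
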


\begin{proof}
In view of Lemma \ref{lem:PuntibuoniAperto} for the general parameter
$w$ we have $\rk\left(d\Phi_{X,w}\right)=\dim\W-\dim\left[\g\left(X\right)\cdot w\right]$,
in particular $\dim\F\left(X\right)=\dim\W-\dim\left[\mathcal{G}\left(X\right)\cdot w\right]$
and we conclude that
\[
\dim\Om\left(X\right)=\dim\F\left(X\right)
\]
but since the map $\Om\left(X\right)\to\F\left(X\right)$ is dominant
we conclude that $X$ is generically finitely identifiable. Suppose
now that that $\phi_{X}:\Om\left(X\right)\to\F\left(X\right)$ is
generically finite. Since we are in characteristic 0 then it is generically
unramified hence the general parameter is good. The last statement
follows from Lemma \ref{lem:GFIimplicaRidotta}.
\end{proof}
\begin{prop}
\label{lem:LocalmenteEtale}Let $w\in\W\c$ and suppose that the map
$\phi_{X}:\Om\left(X\right)\to\F\left(X\right)$ induces an isomorphism
between the completed local rings
\[
\widehat{\phi_{X,w}}:\widehat{\mathcal{O}_{\F\left(X\right),\Phi_{X}\left(w\right)}}\to\widehat{\mathcal{O}_{\Om\left(X\right),\left[w\right]}}
\]
then $w$ is good. Moreover, given $U\su\F\left(X\right)$ an open
subset contained in $\Phi_{X}\left(\W\c\right)\backslash\mathcal{D}_{\mathrm{arch}}\left(X\right)$
such that $\widehat{\phi_{X,w}}$ is an isomorphism for every parameter
over $U$, then
\[
\phi_{X\vert\phi_{X}^{-1}\left(U\right)}:\phi_{X}^{-1}\left(U\right)\to U
\]
is étale.
\end{prop}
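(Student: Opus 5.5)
The first assertion is a tangent-space computation. The idea is that an isomorphism of completed local rings identifies the Zariski tangent spaces, and the quotient map is smooth at full parameters, so the kernel of $d\Phi_{X,w}$ reduces to the orbit directions.

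\textbf{Step 1: identify the tangent space of the quotient.} Since $X$ is fully connected with non-increasing widths (the standing assumption of this section), Proposition \ref{prop:AmpiezzeNonCrescentiLuogoPienoLiscio} says that $\mathcal{G}\left(X\right)$ acts freely on $\W\c$. It also says that $\Om\c$ is smooth, and Theorem \ref{thm:LinearlyReductiveGroup} gives the same conclusion for the image of $\W\c$ in the quotient. By Luna's étale slice theorem (free action, closed orbits in the affine invariant open set), $\pi_{X}:\W\c\to\Om\c$ is a principal $\mathcal{G}\left(X\right)$-bundle in the étale topology, hence a smooth morphism. Consequently $d\pi_{X,w}$ is surjective and
\[
\ker d\pi_{X,w}=T_{w}\left(\mathcal{G}\left(X\right)\cdot w\right)=\g\left(X\right)\cdot w .
\]
The main obstacle is this identification of $\ker d\pi_{X,w}$ with the orbit tangent space. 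It needs closedness of orbits in $\W\c$, or at least that $\W\c\to\Om\c$ is a geometric quotient. If Luna's theorem is not directly available, I would argue dimensionally instead. We have $\ker d\pi_{X,w}\supseteq\g\left(X\right)\cdot w$. Both $\W\c$ and $\Om\c$ are smooth, and the general fibre of $\pi_{X}$ is a single orbit of dimension $\dim\mathcal{G}\left(X\right)$ because stabilizers are trivial. Generic smoothness then gives equality of the two spaces generically, and one propagates this using the homogeneity of the free action.

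\textbf{Step 2: deduce goodness.} Factor $\Phi_{X}=\phi_{X}\circ\pi_{X}$. An isomorphism $\widehat{\phi_{X,w}}$ of complete local rings induces an isomorphism of cotangent spaces $\mathfrak{m}/\mathfrak{m}^{2}$, so $d\phi_{X,[w]}:T_{[w]}\Om\to T_{\Phi_{X}(w)}\F\left(X\right)$ is an isomorphism, in particular injective. Therefore
\[
\ker d\Phi_{X,w}=\ker\left(d\phi_{X,[w]}\circ d\pi_{X,w}\right)=\ker d\pi_{X,w}=\g\left(X\right)\cdot w ,
\]
which is exactly Definition \ref{def:ParametroGood}.

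\textbf{Step 3: étaleness over $U$.} For the second assertion, let $V=\phi_{X}^{-1}\left(U\right)$. Every point $[w]$ of $V$ has a full representative $w$, since $U\su\Phi_{X}\left(\W\c\right)$ and $U$ avoids $\Da\left(X\right)$, which contains every function realized by a non-full parameter. By hypothesis $\widehat{\phi_{X,w}}$ is an isomorphism for each such point. A morphism of finite type between noetherian schemes over $\CC$ that induces isomorphisms on all completed local rings (with the same residue field $\CC$ at closed points) is flat, since completion is faithfully flat, and unramified. Hence it is étale at every closed point of $V$. Because the étale locus is open and contains all closed points of $V$, the restriction $\phi_{X}\vert_{V}:V\to U$ is étale. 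In passing, this also shows that $U$ is smooth, because $V\su\Om\c$ is smooth.
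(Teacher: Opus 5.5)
Your proposal is correct and follows essentially the same route as the paper. The isomorphism of completed local rings makes $d\phi_{X,[w]}$ injective on Zariski tangent spaces, freeness of the $\mathcal{G}(X)$-action at full parameters gives $\ker d\pi_{X,w}=\mathfrak{g}(X)\cdot w$, and étaleness over $U$ follows from formal étaleness at closed points. Your Luna-slice justification of the kernel identification (its closed-orbit hypothesis does hold on $\mathcal{W}^{\circ}$ by a Hilbert--Mumford check) and your explicit flatness/unramifiedness argument fill in steps the paper only asserts; the vague ``homogeneity'' fallback is unnecessary and can be dropped.
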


\begin{proof}
Let $\pi:\W\to\Om$ be the quotient, $p=\Phi_{X}\left(w\right)$ and
$q=\pi\left(w\right)\in\Om\left(X\right)$, then the local map $\mathcal{O}_{\F\left(X\right),p}\to\mathcal{O}_{\Om\left(X\right),q}$
induces an isomorphism on the completion, hence, dualizing the isomorphism
\[
\frac{\mathfrak{m}_{p}}{\mathfrak{m}_{p}^{2}}\to\frac{\mathfrak{m}_{q}}{\mathfrak{m}_{q}^{2}}
\]
we deduce that $d\phi_{X,q}:T_{\Om\left(X\right),q}\to T_{\F\left(X\right)_{p}}$
is an isomorphism, in particular it is injective. It follows from
Proposition \ref{prop:AmpiezzeNonCrescentiLuogoPienoLiscio} that
the action of $\mathcal{G}\left(X\right)$ is free at $w$ and $\ker\left(d\pi_{w}\right)=\g\left(X\right)\cdot w$.
In view of the injectivity of $d\phi_{X,q}$, from 
\[
d\Phi_{X,w}=d\phi_{X,q}\circ d\pi_{w}
\]
we deduce that $\ker\left(d\Phi_{X,w}\right)=\g\left(X\right)\cdot w$.
The last statement follows from the fact that in characteristic 0
for schemes of finite type, formal étaleness implies local étalness.
\end{proof}
\begin{rem}
Note that under the assumptions of Lemma \ref{lem:LocalmenteEtale}
the subset $\Phi_{X}\left(\W\c\right)$ contains a non empty open
subset of $\F\left(X\right)$: in fact since there exists a good parameter
then $X$ is generically finitely identifiable in view of Proposition
\ref{prop:BuonoImplicaGFI} therefore is reduced in view of \ref{lem:GFIimplicaRidotta}
hence $\F\left(X\right)\neq\mathcal{D}_{\mathrm{arch}}\left(X\right)$
in view of Proposition \ref{prop:IrriducibilitaArchitetturale}. It
follows that $\Phi_{X}\left(\W\c\right)$ is a dense constructible
subset hence it contains a non empty open subset.
\end{rem}

\subsection{Regular triples}
\begin{construction}
\label{Const:ParametrizzazioneWaringSuperiore}
\global\long\def\OO{\mathcal{O}}%
Fix two finite dimensional vector spaces $E$ and $B$ and fix an
integer $r\ge2$. Let
\[
Y_{E,B,r}=\left\{ c\ot\ell^{r}\,\vert\,c\in B,\ell\in E\right\} \su B\ot\Sym^{r}E
\]
that is $Y_{E,B,r}$ is the affine cone over the Segre-Veronese variety
$\PP\left(B\right)\times v_{r}\PP\left(E\right)\su\PP\left(B\ot\Sym^{r}E\right)$.
For $\dim B\le a\le\dim E$ we denote by $\sigma_{a}\left(Y_{E,B,r}\right)\su B\ot\Sym^{r}E$
the Zariski closure of the set of sums $y_{1}+\dots+y_{a}$ with $y_{i}\in Y_{E,B,r}$.
Let $U_{a,B}\left(E\right)\su\left(\PP\left(B\right)\times\PP\left(E\right)\right)^{a}$
as the subset of uples
\[
\left(\left(\left[c_{1}\right],\left[\ell_{1}\right]\right),\dots,\left(\left[c_{a}\right],\left[\ell_{a}\right]\right)\right)
\]
where $\left[\ell_{1}\right].\dots,\left[\ell_{a}\right]$ are independet
and $\left[c_{1}\right],\dots,\left[c_{a}\right]$ span $\PP\left(B\right)$.
Over $\PP\left(B\right)\times\PP\left(E\right)$ we have the sheaf
$L=\mathcal{O}_{\PP\left(B\right)}\left(-1\right)\ot\OO_{\PP\left(E\right)}\left(-r\right)$
and, if we denote with $\pi_{k}:U_{a,B}\left(E\right)\to\PP\left(B\right)\times\PP\left(E\right)$
the $k$-th projection we can consider the sheaves $L_{k}=\pi\inv_{k}L$
over $U_{a,B}\left(E\right)$. Over the point $\left(\left(\left[c_{i}\right],\left[\ell_{i}\right]\right)_{i=1}^{a}\right)\in U_{a,B}\left(E\right)$
the fibre of $L_{k}$ is the line $\CC\cdot\left(c_{k}\ot\ell_{k}^{r}\right)$.
\end{construction}

Define $S_{a,B,r}\left(E\right)$ as the subset of the total space
$\mathrm{Tot}_{U_{a,B}\left(E\right)}\left(L_{1}\op\dots\op L_{a}\right)$
given by 
\[
\left(\left(\left(\left[c_{i}\right],\left[\ell_{i}\right]\right)_{i=1}^{a}\right),\left(v_{1},\ldots,v_{a}\right)\right)\qquad v_{i}\neq0\mbox{ for every \ensuremath{i}}.
\]
Equivalently, after choosing representatives $c_{i}\in B$ and $\ell_{i}\in E$,
we may write $v_{i}=\lambda_{i}c_{i}\otimes\ell_{i}^{r}$ for $\lambda_{i}\neq0$.
We have a natural morphism
\[
\begin{aligned}S_{a,B,r}\left(E\right) & \to\sigma_{a}\left(Y_{E,B,r}\right)\su B\ot\Sym^{r}E\\
\left(\left(\left(\left[c_{i}\right],\left[\ell_{i}\right]\right)_{i=1}^{a}\right),\left(v_{1},\ldots,v_{a}\right)\right) & \mapsto v_{1}+\dots+v_{a}
\end{aligned}
.
\]
Moreover this map is invariant under the natural action of the symmetric
group $S_{a}$, hence we can consider the quotient $Q_{a,B,r}\left(E\right)=S_{a,B,r}\left(E\right)/S_{a}$
and finally we have a morphism
\[
\Psi_{a,B,r}^{E}:Q_{a,B,r}\left(E\right)\to\sigma_{a}\left(Y_{E,B,r}\right)\su B\ot\Sym^{r}E.
\]

\begin{defn}
\label{def:RegularTriple}With setting and notation as in Construction
\ref{Const:ParametrizzazioneWaringSuperiore}, let $x\in Q_{a,B,r}\left(E\right)$
and set $y=\Psi_{a,B,r}^{E}\left(x\right)\in\sigma_{a}\left(Y_{E,B,r}\right)$.
We say that $x$ is regular with respect to $\left(a,B,r,E\right)$
if the induced morphism between completed local rings
\begin{equation}
\widehat{\OO_{\sigma_{a}\left(Y_{E,B,r}\right),y}}\to\widehat{\OO_{Q_{a,B,r}\left(E\right),x}}.\label{eq:IsomorfismoDefinizioneRegolare}
\end{equation}
is an isomorphism. We denote by $Q_{a,B,r}^{\mathrm{reg}}\left(E\right)\subseteq Q_{a,B,r}\left(E\right)$
the locus of points that are regular with respect to $\left(a,B,r,E\right)$.
Finally, a triple $\left(a,b,r\right)$ with $b\leq a$ is regular
if, for vector spaces $E$ and $B$ with $\dim E\ge a$ and $\dim B=b$
we have $Q_{a,B,r}^{\mathrm{reg}}\left(E\right)=Q_{a,B,r}\left(E\right)$.
\end{defn}

Definition \ref{def:RegularTriple} says that there is a one-to-one
correspondence between formal deformations of the point $y\in\sigma_{a}\left(Y_{a,B,r}\right)$
and formal deformations of the parameter $x\in Q_{a,B,r}\left(E\right)$.
Moreover $Q_{a,B,r}^{reg}\left(E\right)$ is open since it coincides
with the étale locus of $\Psi_{a,b,r}:Q_{a,B,r}\left(E\right)\to\sigma_{a}\left(Y_{E,B,r}\right)$.
\begin{rem}
When $\dim B=1$ then $\PP Y_{E,1,r}=v_{r}\PP\left(E\right)$ and
$U_{a,B}\left(E\right)\su\PP\left(E\right)^{a}$ is the open subset
of $a$-uples $\left(\left[\ell_{1}\right],\dots,\left[\ell_{a}\right]\right)$
that span $\PP\left(E\right)$. Moreover $Q_{a,B,r}\left(E\right)$
is the variety of decompositions of the sums $\lambda_{1}\ell_{1}^{r}+\dots+\lambda_{a}\ell_{a}^{r}$
with $\lambda_{i}\neq0$ up to permutation and up to the rescaling.
Thus, when $\dim B=1$, regularity of a triple at a point is the local
version of Waring identifiability: near the chosen point, the affine
parametrization induces an isomorphism on completed local rings.
\end{rem}

\subsection{Regularity of the networks}

The aim of this section is to prove the following:
\begin{thm}
\label{thm:=00005CW=00005CcImplicaLiscio}Let $X$ be a fully connected
network of non-increasing widths and scalar output. Suppose moreover
that, setting $m_{t}=\dim\N_{t}$, the triples $\left(m_{t},m_{t+1},r_{t}\right)$
are regular in the sense of Definition \ref{def:RegularTriple} for
every $t=1,\dots,L-1$. Then $\Phi_{X}\left(\W\c\right)$ is an open
subset contained in the smooth locus of $\F\left(X\right)$.
\end{thm}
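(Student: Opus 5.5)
The plan is to peel off the network one layer at a time, starting from the input side, and to compare completed local rings at each step. Write $m_0=\dim X\i$ and $m_t=\dim\N_t$, so $m_L=\dim X\o=1$. For a full parameter $w\in\W\c$ the realized function is
\[
\Phi_X(w)=w_L\bigl(\sigma_{r_{L-1}}(\cdots)\bigr)=\sum_{j=1}^{m_{L-1}}(w_L)_j\,\bigl(g_j\bigr)^{r_{L-1}},
\]
where $g_j$ is the $j$-th coordinate of the truncated network.

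The starting observation concerns layer $t$ with hidden neurons $e_t^1,\dots,e_t^{m_t}$. The composite of the first $t$ layers, followed by the activation and by $w_{t+1}$, is
\[
\sum_{j} w_{t+1}(e_t^j)\otimes \ell_j^{r_t},
\]
where the $\ell_j$ are the rows of the previous map, viewed in a suitable space $E_t$ of forms of degree $r_1\cdots r_{t-1}$ on $X\i$. Fullness of $w$ says three things:
\begin{itemize}
\item the columns $c_j=w_{t+1}(e_t^j)$ are nonzero;
\item the $c_j$ span $B=\N_{t+1}$, by surjectivity, using non-increasing widths;
\item the $\ell_j$ are linearly independent, since the earlier maps are surjective and the previous layer is inductively injective on the relevant coordinates.
\end{itemize}
Hence the data of layer $t$ define a point of $Q_{m_t,\N_{t+1},r_t}(E_t)$. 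Passing to $Q$ quotients exactly by the permutation and rescaling part of $\mathcal{G}(X)$ acting at neuron layer $t$.

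With this in hand, I would define successive quotients $\mathcal P_t$. Here $\mathcal P_t$ is the variety parametrizing the first $t$ layers modulo symmetries in layers $<t$, together with the "outgoing function" in $\N_{t}\otimes\Sym(\cdot)$. Concretely:
\begin{itemize}
\item $\mathcal P_L$ is an open neighbourhood of $\Phi_X(w)$ in $\F(X)$.
\item $\mathcal P_1$ is $\Om\c(X)$, which is smooth by Proposition \ref{prop:AmpiezzeNonCrescentiLuogoPienoLiscio}.
\end{itemize}
Each $\mathcal P_{t+1}$ is obtained from $\mathcal P_t$ by applying the layerwise map $\Psi^{E_t}_{m_t,\N_{t+1},r_t}$ fiberwise, over the parameters of the earlier layers. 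The key step is then to show that
\[
\widehat{\mathcal O}_{\mathcal P_{t+1}}\to\widehat{\mathcal O}_{\mathcal P_t}
\]
is an isomorphism at the image of $w$. Regularity of the triple $(m_t,m_{t+1},r_t)$ gives exactly that $\Psi$ is formally an isomorphism at the relevant point. One then has to check that this is compatible with varying $E_t$-coordinates. Since the $\ell_j$ themselves are the outputs of the earlier layers, a formal deformation of the sum recovers a formal deformation of the unordered tuple $(c_j,\ell_j)$ up to scaling, and hence of the earlier-layer function. This is a base-change argument for completed local rings: formal étaleness of $\Psi$ is stable under the relevant pullback.

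Composing these isomorphisms over $t$ gives
\[
\widehat{\mathcal O_{\F(X),\Phi_X(w)}}\simeq\widehat{\mathcal O_{\Om\c(X),[w]}}.
\]
The right-hand side is regular, so $\Phi_X(w)$ is a smooth point of $\F(X)$, since regularity is detected on the completion. For openness I would argue as follows. Proposition \ref{lem:LocalmenteEtale}, with the same isomorphisms at every point of $\W\c$, shows that $\phi_X$ restricted to $\Om\c$ is étale onto its image. Étale morphisms are open, and $\Om\c$ is open in $\Om$. To make this rigorous one can use the fact that the formal inverses glue, so that $\Om\c\to\F(X)$ is étale and hence open, with $\Phi_X(\W\c)$ equal to its image.

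The main obstacle is the inductive step. Regularity of a triple is stated for a fixed vector space $E$ with fixed coordinates. In the network, however, the $\ell_j$ vary inside the image of the previous layers, not freely in $E_t$, and the neurovariety at stage $t+1$ is not literally $\sigma_{m_t}(Y_{E_t,\N_{t+1},r_t})$. To handle this I would first try to realize $\mathcal P_{t+1}$ as a fiber product of $\mathcal P_t$ with $\sigma_{m_t}(Y)$ over $Q$. I would then use that the formal isomorphism $\widehat\Psi$ identifies deformations of the sum with deformations of the tuple $(\ell_j)$. A deformation of the realized function therefore lifts uniquely to a deformation in which the $\ell_j$ stay in the image of the previous stage, since that image is itself determined formally by the previous inductive isomorphism.
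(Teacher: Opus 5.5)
Your outline follows the paper's architecture: successive quotients $\mathcal P_t$, a completed-local-ring isomorphism at each layer from regularity of the triple, composition down to $\Om^{\circ}(X)$, then étaleness for openness and smoothness. However, the setup is garbled and the step that carries the content is left unproved.

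The spaces must be $\mathcal P_t=Z_t/\mathcal G_t(X)$, where $Z_t=\overline{\rho_t(\W^{\circ})}$, $\rho_t(w)=(q^{(t)}_w,w_{t+1},\dots,w_L)$, and $\mathcal G_t(X)$ collects the symmetries of the layers $s\ge t$. The earlier layers are forgotten (absorbed into $q^{(t)}$) and the later ones are kept. The map $\mathcal P_t\to\mathcal P_{t+1}$ is then $\Psi\times\mathrm{id}_V$, with the tail $v=(w_{t+2},\dots,w_L)\in V$ as inert parameters. Your description (``first $t$ layers modulo symmetries in layers $<t$'', with $\Psi$ applied ``fiberwise over the parameters of the earlier layers'') contradicts your own claims $\mathcal P_1=\Om^{\circ}(X)$ and $\mathcal P_L\subseteq\F(X)$. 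Also, the independence of the $\ell_j=q^{(t)}_j$ should be propagated as algebraic independence (Lemma \ref{lem:AlgebricamenteIndipendenti}). Linear independence alone is not preserved under powers, e.g.\ $(x^2+y^2)^2-(x^2-y^2)^2-(2xy)^2=0$.

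The genuine gap is the step you flag as the main obstacle. Regularity identifies the ambient germs of $Q\times V$ at $x=(\xi_t(w),v)$ and of $\sigma_{m_t}(Y)\times V$ at $y=\rho_{t+1}(w)$. Let $C\subseteq Q\times V$ be the image of (the full part of) $Z_t$ modulo $\mathbb G_m^{m_t}\rtimes S_{m_t}$. You must show that this identification carries the germ of $Z_{t+1}$ at $y$ onto the germ of $C$ at $x$, i.e.\ that near $x$ one has $C\cong Z_{t+1}\times_{\sigma_{m_t}(Y)\times V}(Q\times V)$. Your fibre product is oriented the wrong way, since $\sigma_{m_t}(Y)$ does not map to $Q$. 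Base change only helps once this square is known to be cartesian. The inductive isomorphism $\widehat{\mathcal O}_{\mathcal P_t}\cong\widehat{\mathcal O}_{\mathcal P_1}$ says nothing about which deformations of $q^{(t+1)}$ inside $Z_{t+1}$ lift into $Z_t$. So ``that image is itself determined formally by the previous inductive isomorphism'' is not an argument.

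Some global input is unavoidable. If $y$ had a second preimage $x'\in C$, then $Z_{t+1}$ could have a second branch at $y$, and $\widehat{\mathcal O}_{Z_{t+1},y}\to\widehat{\mathcal O}_{C,x}$ would have a kernel. The paper supplies this input in Lemma \ref{lem:InversaWaringPreservaZ}. Points of the dense subset $\rho_{t+1}(\W^{\circ})\subseteq Z_{t+1}$ are decomposed by the network itself, with $q^{(t)}$-part in $Z_t$. Hence equations of $Z_t$ pulled back along the inverse of $\Psi$ vanish on $Z_{t+1}$.

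This works cleanly because, in the cases of Proposition \ref{prop:TripleRegolari}, $\Psi$ is an open immersion. Let $W$ be its open image. Then $(\Psi\times\mathrm{id}_V)(C)$ is closed in $W$, lies in $Z_{t+1}$ and contains $\rho_{t+1}(\W^{\circ})$, hence equals $Z_{t+1}\cap W$. With this, plus the trivial-stabilizer check at $\rho_t(w)$ needed to pass from $Z_t$ to $\mathcal P_t$, your outline becomes the paper's argument.
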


Keeping the assumptions of Theorem \ref{thm:=00005CW=00005CcImplicaLiscio},
now we introduce some piece of notation: fix $w\in\W\c$, then given
$x\in X\i=\N_{0}$ and a layer $t$, we define $x_{t}\left(w\right)\in\N_{t}$
as the image of $x$ under the parameter $w$, hence $x_{0}\left(w\right)=x$,
$x_{1}\left(w\right)=\left[w_{1}\left(x\right)\right]^{r_{1}}$ and
so on. On the other hand for every $t=1,\ldots,L-1$ we have functions
\[
q^{\left(t\right)}=q_{w}^{\left(t\right)}=\left(q_{1}^{\left(t\right)},\ldots,q_{m_{t}}^{\left(t\right)}\right):X\i\to\N_{t}
\]
before applying the activation and 
\[
z^{\left(t\right)}=z_{w}^{\left(t\right)}=\left(\left(q_{1}^{\left(t\right)}\right)^{r_{t}},\ldots,\left(q_{m_{t}}^{\left(t\right)}\right)^{r_{t}}\right):X\i\to\N_{t}
\]
after the activation. Note that $q^{\left(t+1\right)}=w_{t+1}z^{\left(t\right)}$
or, equivalently, if $c_{i}^{\left(t+1\right)}\in\CC^{m_{t+1}}$ denotes
the $i$-th column of $w_{t+1}$, then
\begin{equation}
q^{\left(t+1\right)}=\sum_{i=1}^{m_{t}}c_{i}^{\left(t+1\right)}\otimes\left(q_{i}^{\left(t\right)}\right){}^{r_{t}}.\label{eq:Formulaqt+1}
\end{equation}
We can define, for $R_{t-1}=r_{1}\cdots r_{t-1}$ and $R_{0}=1$,
$\mathcal{H}_{t}=\Sym^{R_{t-1}}\left(X\inv\i\right)$
\[
\begin{aligned}\rho_{t}:\W & \to\left(\N_{t}\ot\mathcal{H}_{t}\right)\times\prod_{s=t+1}^{L}\Hom_{\CC}\left(\N_{s-1},\N_{s}\right).\\
w & \mapsto\left(q_{w}^{\left(t\right)},w_{t+1},\ldots,w_{L}\right)
\end{aligned}
\]
Note that $\rho_{0}=\mathrm{Id}_{\W}$ and $\rho_{L}=\Phi_{X}$. Call
$Z_{t}$ the Zariski closure of $\rho_{t}\left(\W\c\right)$, then
the group 
\[
\mathcal{G}_{t}\left(X\right)=\prod_{s=t}^{L-1}\left(\mathbb{G}_{m}^{m_{s}}\rtimes S_{m_{s}}\right)
\]
acts naturally on $Z_{t}$: it acts on the matrices $w_{t+1},\ldots,w_{L}$
by the usual action and on $q^{\left(t\right)}$via the action on
$\N_{t}$, namely if $g_{t}=\left(D_{t},P_{t}\right)\in\mathbb{G}_{m}^{m_{t}}\rtimes S_{m_{t}}$
then
\[
g_{t}\cdot q^{\left(t\right)}=P_{t}D_{t}q^{\left(t\right)}.
\]
In view of Theorem \ref{thm:LinearlyReductiveGroup} the quotient
$\mathcal{P}_{t}=Z_{t}/\mathcal{G}_{t}\left(X\right)$ is well-defined
as an algebraic variety. This variety parametrizes the tale of the
network where the input is compressed in the function $q^{\left(t\right)}:X\i\to\N_{t}$.
Denote with $\mathcal{P}_{t}\left(w\right)$ the point in $\mathcal{P}_{t}$
corresponding to $w$. In particular we will write
\[
\widehat{\OO_{\mathcal{P}_{t}\left(w\right)}}=\widehat{\OO_{\mathcal{P}_{t},\left(q_{w}^{\left(t\right)},w_{t+1},\ldots,w_{L}\right)}}.
\]

\begin{rem}
\label{rem:OsservazioniP}We have $\mathcal{P}_{1}=\Om\c$ hence
\[
\widehat{\OO_{\mathcal{P}_{1}\left(w\right)}}=\widehat{\OO_{\Om\c,\left[w\right]}},
\]
on the other extreme, since $\rho_{L}=q^{\left(L\right)}=\Phi_{X}$
and $\mathcal{G}_{L}\left(X\right)=\left\{ 1\right\} $ we have
\[
\widehat{\OO_{\mathcal{P}_{L}\left(w\right)}}=\widehat{\OO_{\F\left(X\right),\Phi_{X}\left(w\right)}}.
\]
For every $t$ we have a morphism 
\[
\begin{aligned}\mu_{t}:Z_{t} & \to Z_{t+1}\\
\left(q^{\left(t\right)},w_{t+1},\dots,w_{L}\right) & \mapsto\left(q^{\left(t+1\right)},w_{t+2},\dots,w_{L}\right)
\end{aligned}
\]
that is equivariant with respect to the actions of $\mathcal{G}_{t}\left(X\right)$
and $\mathcal{G}_{t+1}\left(X\right)$, in particular it induces a
morphism $\tilde{\mu}_{t}:\mathcal{P}_{t}\to\mathcal{P}_{t+1}$ which
is the identity on the tail $\left(w_{t+2},\dots,w_{L}\right)$.
\end{rem}

\begin{lem}
\label{lem:AlgebricamenteIndipendenti}Let $X$ be a fully connected
network with non-increasing widths $m_{0}\ge m_{1}\ge\dots\ge m_{L-1}$.
Let $w\in\mathcal{W}^{\circ}$, then for every $t=1,\ldots,L-1$ 
\[
q_{1}^{(t)},\dots,q_{m_{t}}^{(t)}
\]
are algebraically independent.
\end{lem}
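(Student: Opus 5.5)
Since the $q_i^{(t)}$ are homogeneous polynomials on $X\i$, algebraic independence of $q_1^{(t)},\dots,q_{m_t}^{(t)}$ is equivalent to dominance of the polynomial map $q^{(t)}:X\i\to\N_t$. In characteristic zero this is in turn equivalent to the Jacobian $dq^{(t)}_x$ having rank $m_t$ at some (equivalently, a general) point $x$. We argue by induction on $t$, proving the stronger statement that $q^{(t)}:X\i\to\N_t$ is dominant.

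For $t=1$ we have $q^{(1)}=w_1$. Since $w\in\W\c$ and $m_0\ge m_1$, the matrix $w_1$ has maximal rank $m_1$, so it is a surjective linear map and the coordinates $q^{(1)}_i$ are linearly independent linear forms, hence algebraically independent.

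For the inductive step, assume $q^{(t)}$ is dominant. The activation $\sigma_{r_t}:\N_t\to\N_t$, $(y_1,\dots,y_{m_t})\mapsto(y_1^{r_t},\dots,y_{m_t}^{r_t})$, is a finite surjective morphism. Hence $z^{(t)}=\sigma_{r_t}\circ q^{(t)}$ is dominant; equivalently, the powers $(q_i^{(t)})^{r_t}$ remain algebraically independent, since a polynomial relation among them would be a relation among the $q_i^{(t)}$. Next, $q^{(t+1)}=w_{t+1}z^{(t)}$ by (\ref{eq:Formulaqt+1}). Because $w\in\W\c$ and $m_t\ge m_{t+1}$, the matrix $w_{t+1}$ has maximal rank $m_{t+1}$, so it is a surjective linear map $\N_t\to\N_{t+1}$. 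A composition of a dominant map with a surjective linear map is dominant, so $q^{(t+1)}$ is dominant. Concretely, the components of $q^{(t+1)}$ are $m_{t+1}$ linearly independent linear combinations of the algebraically independent elements $z^{(t)}_i$, and an invertible linear change of variables in a polynomial ring shows these combinations are algebraically independent.

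The argument has no real obstacle. The only care needed is in the direction of the width inequalities: maximal rank means surjectivity precisely because $m_{t}\ge m_{t+1}$. The condition that no column vanishes is not needed here. It also matters that the widths are non-increasing from the input onward, so that $m_0\ge m_1$ and $w_1$ is surjective.
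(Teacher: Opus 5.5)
Your proof is correct and follows the paper's argument. Both proceed by induction on $t$: linear independence of the rows of $w_1$ gives the base case, algebraic independence survives taking $r_t$-th powers, and the linearly independent rows of $w_{t+1}$ are completed to an invertible linear change of variables. Your dominance and Jacobian reformulation is only a geometric restatement of these same steps and is not needed.
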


\begin{proof}
We argue by induction on $t$. For $t=1$, the tuple $q^{\left(1\right)}$
is given by the rows of the matrix $w_{1}$, viewed as linear forms
on $X\i$. Since $w\in\mathcal{W}^{\circ}$ and $m_{1}\le m_{0}$,
the matrix $w_{1}$ has rank $m_{1}$ and and hence $q_{1}^{\left(1\right)},\dots,q_{m_{1}}^{\left(1\right)}$
are linearly independent linear forms. After a linear change of coordinates
on $X\i$, they may be identified with the first $m_{1}$ coordinate
functions, hence they are algebraically independent. Assume now that
$q_{1}^{\left(t-1\right)},\dots,q_{m_{t-1}}^{\left(t-1\right)}$ are
algebraically independent. For $i=1,\dots,m_{t-1}$, let $a_{i}=\left(q_{i}^{\left(t-1\right)}\right)^{r_{t-1}}$.
We see that $a_{1},\dots,a_{m_{t-1}}$ are still algebraically independent
since the endomorphism
\[
\begin{aligned}\CC\left[T_{1},\dots,T_{m_{t-1}}\right] & \to\CC\left[T_{1},\dots,T_{m_{t-1}}\right]\\
T_{i} & \mapsto T_{i}^{r_{t-1}}
\end{aligned}
\]
is injective and the algebraic independence of the $q_{i}^{\left(t-1\right)}$
identifies $\CC[T_{1},\dots,T_{m_{t-1}}]$ with the subalgebra generated
by them. Now $q^{\left(t\right)}=w_{t}\left(a_{1},\dots,a_{m_{t-1}}\right)$
and since the widths are non-increasing, $m_{t}\le m_{t-1}$ and since
$w_{t}$ has maximal rank, the rows of $w_{t}$ are linearly independent.
Hence they can be completed to an invertible linear transformation
of the vector space spanned by $a_{1},\dots,a_{m_{t-1}}$. Therefore
$q_{1}^{\left(t\right)},\dots,q_{m_{t}}^{\left(t\right)}$ are part
of a linear coordinate system on the polynomial algebra $\CC\left[a_{1},\dots,a_{m_{t-1}}\right]$
and hence are algebraically independent.
\end{proof}
\begin{construction}
\label{Con:Costruzionexi_t}In view of Lemma \ref{lem:AlgebricamenteIndipendenti},
the polynomials $q_{1}^{\left(t\right)},\dots,q_{m_{t}}^{\left(t\right)}$
at $w\in\W\c$ are linearly independent and in view of the definition
of $\W\c$ the columns $c_{1}^{\left(t+1\right)},\dots,c_{m_{t}}^{\left(t+1\right)}$
are not identically zero and span $\N_{t+1}$, hence
\[
\left(\left[c_{1}^{\left(t+1\right)}\right],\left[q_{1}^{\left(t\right)}\right]\right)\dots,\left(\left[c_{m_{t}}^{\left(t+1\right)}\right],\left[q_{m_{t}}^{\left(t\right)}\right]\right)\in U_{m_{t},\N_{t+1}}\left(\mathcal{H}_{t}\right)
\]
and together with summands $c_{i}^{\left(t+1\right)}\ot\left(q_{i}^{\left(t\right)}\right)^{r_{t}}$
they determine a point
\[
\xi_{t}\left(w\right)\in Q_{m_{t},\N_{t+1},r_{t}}\left(\mathcal{H}_{t}\right).
\]
In view of the construction above and (\ref{eq:Formulaqt+1}) we see
that
\[
y_{t}\left(w\right)=\Psi_{m_{t},\N_{t+1},r_{t}}^{\mathcal{H}_{t}}\left(\xi_{t}\left(w\right)\right)=q^{\left(t+1\right)}\in\sigma_{m_{t}}\left(Y_{\mathcal{H}_{t},\N_{t+1},r_{t}}\right).
\]
\end{construction}

\begin{rem}
\label{rem:DeformazioniFormali}
\global\long\def\Def{\mathrm{Def}}%
We recall that, given a variety $X$ over $\CC$ and a closed point
$x\in X$, the functor of formal deformations is the functor
\[
\Def_{X,x}:\mathrm{Art}_{\CC}\to\mathrm{Sets}
\]
defined over the local artinian $\CC$-algebras with residue field
$\CC$ as
\[
\Def_{X,x}\left(A\right)=\left\{ \tilde{x}:\mathrm{Spec}A\to X\,\vert\,\tilde{x}\left(\mathfrak{m}_{A}\right)=x\right\} .
\]
This functor can be represented using local ring homomorphisms
\[
\Def_{X,x}\left(A\right)=\Hom_{\CC}^{\mathrm{loc}}\left(\widehat{\OO_{X,x}},A\right)
\]
therefore a morphism $f:X\to Y$ with $y=f\left(x\right)$ induces
an isomorphism $\widehat{f^{\#}}:\widehat{\OO_{Y,y}}\to\widehat{\OO_{X,x}}$
if and only if it induces a natural isomorphism $\Def_{X,x}\to\Def_{Y,y}$.
\end{rem}

\begin{lem}
\label{lem:InversaWaringPreservaZ}Let $w\in\W^{\circ}$ and suppose
that $\xi_{t}\left(w\right)\in Q_{m_{t},\N_{t+1},r_{t}}^{\mathrm{reg}}\left(\mathcal{H}_{t}\right)$.
Let $A$ be a local Artinian $\CC$-algebra and let
\[
\left(\widetilde{q}^{(t+1)},\widetilde{v}\right)\in\Def_{Z_{t+1},\rho_{t+1}\left(w\right)}\left(A\right).
\]
Then the formal lift of $\widetilde{q}^{(t+1)}$ obtained from the
formal inverse of $\Psi_{m_{t},\N_{t+1},r_{t}}^{\mathcal{H}_{t}}$
defines an $A$-point
\[
\left(\widetilde{q}^{(t)},\widetilde{w}_{t+1},\widetilde{v}\right)\in\Def_{Z_{t},\rho_{t}\left(w\right)}\left(A\right).
\]
\end{lem}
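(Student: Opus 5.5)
The plan is to build the lift in three steps. First obtain the deformed layer-$t$ data from the formal inverse of $\Psi=\Psi_{m_{t},\N_{t+1},r_{t}}^{\mathcal{H}_{t}}$. Then check that this data, together with $\widetilde v$, lands in $Z_{t}$ rather than only in the ambient space. Finally verify compatibility with $\mu_t$.

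We begin by unwinding the deformation. An $A$-point of $Z_{t+1}$ at $\rho_{t+1}(w)$ consists of $\widetilde q^{(t+1)}\in \N_{t+1}\ot\mathcal{H}_{t+1}\ot A$ reducing to $q^{(t+1)}_w$, and $\widetilde v=(\widetilde w_{t+2},\dots,\widetilde w_L)$ reducing to $(w_{t+2},\dots,w_L)$. The first step is to show that $\widetilde q^{(t+1)}$ is an $A$-point of $\sigma_{m_t}(Y_{\mathcal{H}_t,\N_{t+1},r_t})$ based at $y_t(w)=q^{(t+1)}_w$ (Construction \ref{Con:Costruzionexi_t}). The closed embedding uses $\mathcal{H}_{t+1}=\Sym^{r_t}\mathcal{H}_t$, with $\mathcal H_t$ regarded as the span of degree-$R_{t-1}$ forms.

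For this, the projection $Z_{t+1}\to \N_{t+1}\ot\mathcal{H}_{t+1}$ has image in $\sigma_{m_t}(Y)$. Indeed $\rho_{t+1}(\W\c)$ lands there by (\ref{eq:Formulaqt+1}), and $\sigma_{m_t}(Y)$ is closed, so the closure $Z_{t+1}$ does as well. Hence the composite $\mathrm{Spec}A\to Z_{t+1}\to\sigma_{m_t}(Y)$ is a formal deformation of $y_t(w)$.

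Next we apply regularity. Since $\xi_t(w)$ is regular, Remark \ref{rem:DeformazioniFormali} gives a unique $\widetilde\xi\in\Def_{Q,\xi_t(w)}(A)$ with $\Psi(\widetilde\xi)=\widetilde q^{(t+1)}$. The quotient map $S_{m_t,\N_{t+1},r_t}(\mathcal H_t)\to Q$ is étale at the preimage of $\xi_t(w)$, because the points $[\ell_i]=[q_i^{(t)}]$ are distinct, being linearly independent by Lemma \ref{lem:AlgebricamenteIndipendenti}. So $S_{m_t}$ acts freely there. Moreover $S$ is locally a product of a torus-bundle over $U$. Therefore $\widetilde\xi$ lifts, after fixing the ordering matching $w$, to $A$-points $\widetilde v_i=\widetilde c_i\ot\widetilde \ell_i^{\,r_t}$. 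Here $\widetilde c_i\in\N_{t+1}\ot A$ lifts $c_i^{(t+1)}$ and $\widetilde\ell_i\in\mathcal H_t\ot A$ lifts $q_i^{(t)}$; we normalize so that the scalar $\lambda_i$ is absorbed into $\widetilde c_i$. This choice of representatives is not unique, but any choice works, since the ambiguity is exactly the $\mathbb{G}_m^{m_t}$ action of $\mathcal G_t$. We then set $\widetilde q^{(t)}=(\widetilde\ell_1,\dots,\widetilde\ell_{m_t})$ and let $\widetilde w_{t+1}$ be the matrix with columns $\widetilde c_i$. By construction $\widetilde w_{t+1}\bigl((\widetilde q^{(t)})^{r_t}\bigr)=\widetilde q^{(t+1)}$, so $\mu_t(\widetilde q^{(t)},\widetilde w_{t+1},\widetilde v)=(\widetilde q^{(t+1)},\widetilde v)$.

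The main obstacle is the last step: showing that $(\widetilde q^{(t)},\widetilde w_{t+1},\widetilde v)$ factors through $Z_t$ and not merely through the ambient space $\N_t\ot\mathcal H_t\times\prod_{s>t}\Hom(\N_{s-1},\N_s)$. Here $Z_t$ is the closure of $\rho_t(\W\c)$. The planned approach is to show that near $\rho_t(w)$ the variety $Z_t$ is the whole ambient product, and in particular smooth there. The tail factor is unconstrained, because $X$ is fully connected. So it suffices to show that the first projection of $Z_t$ contains a neighbourhood of $q_w^{(t)}$ in the locus $\overline{\{q^{(t)}_{w'}\}}$ and that this locus is formally a product with the tail.

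This suggests a stronger formulation, proved by induction on $t$: $Z_t\simeq \overline{q^{(t)}(\W\c)}\times\prod_{s>t}\Hom(\N_{s-1},\N_s)$, and the formal neighbourhood of $q^{(t)}_w$ in the first factor is obtained from that at level $t-1$ through $\Psi$ at layer $t-1$. If this is unavailable, the fallback is to interpret $\Def_{Z_t}$ as deformations inside the scheme-theoretic image. In that case one checks directly that the $\widetilde\ell_i$ are deformations of the $q_i^{(t)}$ along the same parametrization, which means reading $\widetilde\ell_i$ as an $A$-point of the $(t-1)$-level secant variety. This in turn requires the regularity hypothesis at layer $t-1$, which is available since all triples are assumed regular.
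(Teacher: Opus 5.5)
\textbf{Gap: the proposal never shows that the lift lands in $Z_t$.} You correctly isolate the decisive step: $(\widetilde q^{(t)},\widetilde w_{t+1},\widetilde v)$ must be an $A$-point of $Z_t$, not merely of the ambient space. But you do not prove it, and none of the routes you sketch closes it.

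Your product observation is right, and immediate, since $\W^{\circ}$ is a product of open subsets of the factors. So $Z_t=C_t\times\prod_{s>t}\Hom(\N_{s-1},\N_s)$ with $C_t=\overline{q^{(t)}(\W^{\circ})}$, and everything reduces to showing $\widetilde q^{(t)}\in C_t(A)$.

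The three routes you offer for that all fail:
\begin{itemize}
\item Your first plan, that $Z_t$ fills the ambient space near $\rho_t(w)$, fails in general for $t\ge 2$. Indeed $\dim C_t\le\sum_{s\le t}m_{s-1}m_s$, so $C_t$ is in general a proper subvariety of $\N_t\otimes\mathcal{H}_t$; only $C_1$ is the whole space.
\item Describing the formal neighbourhood of $C_t$ through layer $t-1$ says nothing about whether this particular $\widetilde q^{(t)}$, produced by inverting $\Psi$ at layer $t$, lies in it.
\item The fallback is circular. Regularity at layer $t-1$ lets you decompose $\widetilde q^{(t)}$ only after you know it is an $A$-point of the layer-$(t-1)$ secant variety, which is a weak form of the very claim. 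Even then it only pushes the same unproved membership question down to $\widetilde q^{(t-1)}$. Also, regularity at layer $t-1$ is not a hypothesis of the lemma.
\end{itemize}

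\textbf{The missing idea is the paper's density argument.} For $w'\in\W^{\circ}$ the network itself supplies the decomposition $q^{(t+1)}_{w'}=\sum_i c^{(t+1)}_i(w')\otimes\bigl(q^{(t)}_i(w')\bigr)^{r_t}$. By uniqueness, the inverse of $\Psi$ returns exactly this decomposition, so it sends the dense subset $\rho_{t+1}(\W^{\circ})\subset Z_{t+1}$ into $Z_t$. Hence, for every $f$ in the ideal of $Z_t$, the pullback of $f$ along the inverse vanishes on a dense subset of $Z_{t+1}$ near $\rho_{t+1}(w)$. It is therefore zero in $\mathcal{O}_{Z_{t+1},\rho_{t+1}(w)}$ and in its completion. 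That is, the inverse factors through the completion of $Z_t$, independently of the chosen representatives, since $Z_t$ is torus-stable.

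For this to be meaningful, the inverse should be an honest morphism on a Zariski neighbourhood and decompositions should be globally unique, as happens when $\Psi$ is an open immersion (Proposition \ref{prop:TripleRegolari}). A formal isomorphism at the single point $\xi_t(w)$ does not by itself guarantee that the network decompositions of nearby points of $Z_{t+1}$ stay near $\xi_t(w)$.

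\textbf{A separate error: $\mathcal{H}_{t+1}\neq\Sym^{r_t}\mathcal{H}_t$.} Your identification $\mathcal{H}_{t+1}=\Sym^{r_t}\mathcal{H}_t$ fails in general for $t\ge 2$. The multiplication map $\Sym^{r_t}\mathcal{H}_t\to\mathcal{H}_{t+1}$ is surjective but in general not injective (e.g.\ $\dim\Sym^2\Sym^2\CC^2=6>5=\dim\Sym^4\CC^2$), so it is not a closed embedding. The paper makes the same identification tacitly when it places $q^{(t+1)}$ in $\sigma_{m_t}(Y_{\mathcal{H}_t,\N_{t+1},r_t})$, so this is a shared imprecision rather than the main gap.
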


\begin{proof}
The assertion is local at $\rho_{t+1}\left(w\right)$. By the regularity
of $\xi_{t}\left(w\right)$, the morphism $\Psi_{m_{t},\N_{t+1},r_{t}}^{\mathcal{H}_{t}}$
induces an isomorphism on completed local rings at $\xi_{t}\left(w\right)$
and $q^{(t+1)}$. After taking the product with the tail $V=\prod_{s=t+2}^{L}\Hom(\N_{s-1},\N_{s})$,
we have a formal inverse near $\left(q^{(t+1)},v\right)$. We have
to prove that this formal inverse factors through the formal completion
of $Z_{t}$ at $\rho_{t}\left(w\right)$. Equivalently, if $I_{Z_{t}}$
is the ideal of $Z_{t}$ in the ambient parameter space, we have to
show that every element of $I_{Z_{t}}$ maps to zero in $\widehat{\OO_{Z_{t+1},\rho_{t+1}\left(w\right)}}.$
Let $f\in I_{Z_{t}}$, then on the dense subset $\rho_{t+1}\left(\W^{\circ}\right)\subset Z_{t+1}$,
the inverse coincides with the inverse coming from the network itself.
Indeed, for every $w'\in\W^{\circ}$, the decomposition of $q_{w'}^{\left(t+1\right)}$
is
\[
q_{w'}^{\left(t+1\right)}=\sum_{i=1}^{m_{t}}c_{i}^{\left(t+1\right)}\left(w^{\prime}\right)\otimes\left(q_{i}^{(t)}\left(w^{\prime}\right)\right)^{r_{t}},
\]
and therefore the inverse sends $\rho_{t+1}\left(w^{\prime}\right)$
to $\rho_{t}\left(w^{\prime}\right)\in Z_{t}$. Hence the pullback
of $f$ by the formal inverse vanishes on the dense subset $\rho_{t+1}\left(\W^{\circ}\right)$.
Since $Z_{t+1}$ is the Zariski closure of $\rho_{t+1}\left(\W^{\circ}\right)$,
this pullback is zero in the local ring $\OO_{Z_{t+1},\rho_{t+1}\left(w\right)}$,
and hence also in its completion. Thus the formal inverse factors
through the formal completion of $Z_{t}$ at $\rho_{t}\left(w\right)$.
Therefore the formal lift of any deformation in $Z_{t+1}$ is a deformation
in $Z_{t}$.
\end{proof}
\begin{lem}
\label{lem:IsomorfismoP}Let $w\in\W\c$ and suppose that $\xi_{t}\left(w\right)\in Q_{m_{t},\N_{t+1},r_{t}}^{\mathrm{reg}}\left(\mathcal{H}_{t}\right)$
, then the map 
\[
\widehat{\mu_{t}}:\widehat{\OO_{\mathcal{P}_{t+1}\left(w\right)}}\to\widehat{\OO_{\mathcal{P}_{t}\left(w\right)}}
\]
induced by $\tilde{\mu}_{t}$ is an isomorphism.
\end{lem}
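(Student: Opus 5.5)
We prove the statement through the deformation functors of Remark \ref{rem:DeformazioniFormali}. By that remark it is enough to show that $\tilde{\mu}_{t}$ induces a natural bijection
\[
\Def_{\mathcal{P}_{t},\mathcal{P}_{t}\left(w\right)}\left(A\right)\to\Def_{\mathcal{P}_{t+1},\mathcal{P}_{t+1}\left(w\right)}\left(A\right)
\]
for every local Artinian $\CC$-algebra $A$. The first reduction is to pass from the quotients $\mathcal{P}_{t}$ to the spaces $Z_{t}$ with their groups. The point $\rho_{t}\left(w\right)$ has trivial stabilizer in $\mathcal{G}_{t}\left(X\right)$: the coordinates $q_{i}^{\left(t\right)}$ are nonzero and pairwise non-proportional, by Lemma \ref{lem:AlgebricamenteIndipendenti}, so a permutation combined with a rescaling fixing $q^{\left(t\right)}$ must be trivial. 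The same argument as in Proposition \ref{prop:AmpiezzeNonCrescentiLuogoPienoLiscio} then applies to the tail. Consequently, near $\rho_{t}\left(w\right)$ the quotient map $Z_{t}\to\mathcal{P}_{t}$ is formally a principal $\mathcal{G}_{t}\left(X\right)^{\circ}$-bundle, by Luna's étale slice theorem for the linearly reductive group of Theorem \ref{thm:LinearlyReductiveGroup}. Hence $\Def_{\mathcal{P}_{t},\mathcal{P}_{t}\left(w\right)}\left(A\right)$ is the set of deformations of $\rho_{t}\left(w\right)$ in $Z_{t}$ modulo the action of $\ker\left(\mathcal{G}_{t}\left(X\right)^{\circ}\left(A\right)\to\mathcal{G}_{t}\left(X\right)^{\circ}\left(\CC\right)\right)$, which acts freely. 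The same description holds at $t+1$.

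Next we compare the two groups. We have $\mathcal{G}_{t}\left(X\right)=\left(\mathbb{G}_{m}^{m_{t}}\rtimes S_{m_{t}}\right)\times\mathcal{G}_{t+1}\left(X\right)$, and $\mu_{t}$ is invariant under the first factor. This factor acts by $q_{i}^{\left(t\right)}\mapsto\lambda_{i}q_{i}^{\left(t\right)}$ and $c_{i}^{\left(t+1\right)}\mapsto\lambda_{i}^{-r_{t}}c_{i}^{\left(t+1\right)}$, together with the induced permutations. This is exactly the rescaling and reordering of summands by which $Q_{m_{t},\N_{t+1},r_{t}}\left(\mathcal{H}_{t}\right)$ is built from the pairs $\left(c_{i},q_{i}\right)$. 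The map
\[
\left(q^{\left(t\right)},w_{t+1}\right)\mapsto\xi_{t}
\]
of Construction \ref{Con:Costruzionexi_t} therefore identifies, formally near $\rho_{t}\left(w\right)$, the ambient quotient by $\mathbb{G}_{m}^{m_{t}}\rtimes S_{m_{t}}$ with $Q_{m_{t},\N_{t+1},r_{t}}\left(\mathcal{H}_{t}\right)$. Under this identification the map to the $q^{\left(t+1\right)}$-coordinate is $\Psi_{m_{t},\N_{t+1},r_{t}}^{\mathcal{H}_{t}}$, and the tail $\left(w_{t+2},\dots,w_{L}\right)$ is carried along unchanged.

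The argument then splits into surjectivity and injectivity on deformations, modulo $\mathcal{G}_{t+1}\left(X\right)$. For surjectivity, take a deformation $\left(\widetilde{q}^{(t+1)},\widetilde{v}\right)$ in $Z_{t+1}$. Lemma \ref{lem:InversaWaringPreservaZ} lifts it to a deformation $\left(\widetilde{q}^{(t)},\widetilde{w}_{t+1},\widetilde{v}\right)$ in $Z_{t}$. The lift involves choosing representatives of the point of $Q$; this is possible because the torus bundle $S\to U$ and the étale map $S\to Q$ are formally trivial. For injectivity, suppose two deformations in $Z_{t}$ have the same image in $\mathcal{P}_{t+1}$. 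After acting with an element of $\mathcal{G}_{t+1}\left(X\right)\left(A\right)$, which transports $\widetilde{q}^{(t+1)}$ through its action on $\N_{t+1}$ and on the columns, we may assume their images in $Z_{t+1}$ coincide. Regularity of $\xi_{t}\left(w\right)$ then makes their images in $Q$ equal. This means the two deformations differ by an $A$-point of $\mathbb{G}_{m}^{m_{t}}$; the permutation part is trivial infinitesimally, since the $q_{i}$ are distinct. Hence they define the same point of $\mathcal{P}_{t}$. Naturality in $A$ is clear, since every construction is functorial.

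The main obstacle is the first step. We have to justify that formal deformations of the GIT quotients $\mathcal{P}_{t}$ are computed as deformations of $Z_{t}$ modulo the group, and that $Z_{t}$ near $\rho_{t}\left(w\right)$ really is the pullback of $Z_{t+1}$ along $\Psi$; the inclusion in one direction is what Lemma \ref{lem:InversaWaringPreservaZ} provides. I would first try to apply Luna's slice theorem at the closed orbit of $\rho_{t}\left(w\right)$, whose stabilizer is trivial. Closedness of the orbit must be checked; if it fails, I would instead work on the invariant open subset of $Z_{t}$ where $w_{t+1},\dots,w_{L}$ are full, on which the action is free, and use that its geometric quotient is an open subset of $\mathcal{P}_{t}$.
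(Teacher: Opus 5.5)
Your proposal is correct and follows the paper's proof essentially step by step. The shared steps are: trivial stabilizers at $\rho_t(w)$ and $\rho_{t+1}(w)$; the identification of $\mathrm{Def}_{\mathcal{P}_t,\mathcal{P}_t(w)}(A)$ with deformations of $\rho_t(w)$ in $Z_t$ modulo $\prod_{s}(1+\mathfrak{m}_A)^{m_s}$; surjectivity via Lemma \ref{lem:InversaWaringPreservaZ}; and injectivity via regularity of $\xi_t(w)$, with the residual ambiguity being a rescaling in $(1+\mathfrak{m}_A)^{m_t}$ and trivial permutation part. The only difference is that you explicitly flag the free-quotient step and propose to justify it with Luna's slice theorem at the orbit of $\rho_t(w)$, whereas the paper simply asserts it by passing to invariant open neighborhoods.
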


\begin{proof}
Denote with $V=\prod_{s=t+2}^{L}\Hom_{\CC}\left(\N_{s-1},\N_{s}\right)$
and $v=\left(w_{t+2},\dots,w_{L}\right)\in V$. In this way we have
\[
\rho_{t}\left(w\right)=\left(q^{\left(t\right)},w_{t+1},v\right)\in Z_{t}\quad\mbox{and}\quad\rho_{t+1}\left(w\right)=\left(q^{\left(t+1\right)},v\right)\in Z_{t+1}.
\]
First note that, since $w\in\W\c$ and the widths are non-increasing,
the stabilizers of $\rho_{t}\left(w\right)$ and $\rho_{t+1}\left(w\right)$
under the actions of $\mathcal{G}_{t}\left(X\right)$ and $\mathcal{G}_{t+1}\left(X\right)$
are trivial. Indeed, if an element of $\mathbb{G}_{m}^{m_{t}}\rtimes S_{m_{t}}$
fixes $q^{(t)}$, then it fixes a tuple of linearly independent polynomials
in view of Lemma \ref{lem:AlgebricamenteIndipendenti}, hence it is
the identity. Then, using recursively that the matrices $w_{t+1},\ldots,w_{L}$
have maximal rank, all the remaining components of the stabilizer
are also trivial. It follows that, after replacing $Z_{t}$ and $Z_{t+1}$
by invariant open neighborhoods of $\rho_{t}\left(w\right)$ and $\rho_{t+1}\left(w\right)$,
the quotients $Z_{t}\to\mathcal{P}_{t}$ and $Z_{t+1}\to\mathcal{P}_{t+1}$
are geometric quotients by free actions. In view of Remark \ref{rem:DeformazioniFormali},
we want to see that there exists a functorial isomorphism
\[
\Def_{\mathcal{P}_{t},\mathcal{P}_{t}\left(w\right)}\simeq\Def_{\mathcal{P}_{t+1},\mathcal{P}_{t+1}\left(w\right)}
\]
 Fix a local artinian $\CC$-algebra $A$ with residue field $\CC$.
Fix a representative $\rho_{t}\left(w\right)\in Z_{t}$ of $\mathcal{P}_{t}\left(w\right)$.
By definition we have
\[
\Def_{\mathcal{P}_{t}\left(w\right)}\left(A\right)=\left\{ \tilde{\gamma}\in\mathcal{P}_{t}\left(A\right)\,\vert\,\tilde{\gamma}\mod{\mathfrak{m}_{A}=\mathcal{P}_{t}\left(w\right)}\right\} 
\]
and 
\[
\mathcal{G}_{t}\left(A\right)=\prod_{s=t}^{L-1}\left(\mathbb{G}_{m}^{m_{s}}\rtimes S_{m_{s}}\right)\left(A\right)=\prod_{s=t}^{L-1}\left(\left(A^{\times}\right)^{m_{s}}\rtimes S_{m_{s}}\right).
\]
Note that, in general, if $\tilde{\rho_{t}\left(w\right)}\in Z_{t}\left(A\right)$
and $\tilde{g}\in\mathcal{G}_{t}\left(A\right)$, then 
\[
\tilde{g}\cdot\tilde{\rho_{t}\left(w\right)}\mod{\mathfrak{m}_{A}=\left(\tilde{g}\mod{\mathfrak{m}_{A}}\right)\cdot\rho_{t}\left(w\right)\neq\rho_{t}\left(w\right)}
\]
and since $\rho_{t}\left(w\right)$ has trivial stabilizer, we see
that $\tilde{g}\cdot\tilde{\rho_{t}\left(w\right)}\mod{\mathfrak{m}_{A}=\rho_{t}\left(w\right)}$
if and only if 
\[
\tilde{g}\in\mathcal{G}_{t}^{1}\left(A\right)=\ker\left(\mathcal{G}_{t}\left(A\right)\to\mathcal{G}_{t}\left(\CC\right)\right)=\prod_{s=t}^{L-1}\left(1+\mathfrak{m}_{A}\right)^{m_{s}}
\]
since a non trivial permutation cannot reduce to the identity. Hence
we got, for the fixed representative representative $\rho_{t}\left(w\right)\in Z_{t}$
of $\mathcal{P}_{t}\left(w\right)$
\[
\Def_{\mathcal{P}_{t}\left(w\right)}\left(A\right)\simeq\frac{\left\{ \tilde{\rho_{t}\left(w\right)}\in Z_{t}\left(A\right)\,\vert\,\tilde{\rho_{t}\left(w\right)}\mod{\mathfrak{m}_{A}=\rho_{t}\left(w\right)}\right\} }{\mathcal{G}_{t}^{1}\left(A\right)}=\frac{\Def_{Z_{t},\rho_{t}\left(w\right)}\left(A\right)}{\mathcal{G}_{t}^{1}\left(A\right)}.
\]
In the same way one sees that
\[
\Def_{\mathcal{P}_{t+1}\left(w\right)}\left(A\right)\simeq\frac{\Def_{Z_{t+1},\rho_{t+1}\left(w\right)}\left(A\right)}{\mathcal{G}_{t+1}^{1}\left(A\right)}.
\]
The morphism $\tilde{\mu_{t}}:\mathcal{P}_{t}\to\mathcal{P}_{t+1}$
gives a natural transformation
\[
\Def\left(\tilde{\mu_{t}}\right):\Def_{\mathcal{P}_{t}\left(w\right)}\to\Def_{\mathcal{P}_{t+1}\left(w\right)}
\]
which, on $A$-deformations, acts as 
\[
\left(\widetilde{q}^{\left(t\right)},\widetilde{w}_{t+1},\widetilde{v}\right)\mapsto\left(\widetilde{q}^{\left(t+1\right)},\widetilde{v}\right)\quad\mbox{where}\quad\widetilde{q}^{\left(t+1\right)}=\sum_{i=1}^{m_{t}}\widetilde{c_{i}}^{\left(t+1\right)}\ot\left(\widetilde{q}_{i}^{\left(t\right)}\right)^{r_{t}}
\]
We prove that this map is a bijection for every local Artinian $\CC$-algebra
$A$. Recall that the regularity assumption on $\xi_{t}(w)$ means
that 
\[
\Def\left(\Psi_{m_{t},\N_{t+1},r_{t}}^{\mathcal{H}_{t}}\right):\Def_{Q_{m_{t},\N_{t+1},r_{t}}\left(\mathcal{H}_{t}\right),\xi_{t}\left(w\right)}\longrightarrow\Def_{\sigma_{m_{t}}\left(Y_{\mathcal{H}_{t},\N_{t+1},r_{t}}\right),q^{\left(t+1\right)}}
\]
is an isomorphism of functors, where $q^{(t+1)}=\Psi_{m_{t},\N_{t+1},r_{t}}^{\mathcal{H}_{t}}\left(\xi_{t}\left(w\right)\right)$.
Let $\left[\left(\widetilde{q}^{\left(t+1\right)},\widetilde{v}\right)\right]\in\Def_{\mathcal{P}_{t+1}\left(w\right)}\left(A\right)$,
where $\widetilde{v}=\left(\widetilde{w}_{t+2},\ldots,\widetilde{w}_{L}\right)$
and 
\[
\left(\widetilde{q}^{\left(t+1\right)},\widetilde{v}\right)\in\Def_{Z_{t+1},\rho_{t+1}\left(w\right)}\left(A\right)
\]
is a representative. Since $q^{\left(t+1\right)}=\Psi_{m_{t},\N_{t+1},r_{t}}^{\mathcal{H}_{t}}\left(\xi_{t}\left(w\right)\right)$
and in view of the regularity of $\xi_{t}\left(w\right)$, the deformation
$\widetilde{q}^{\left(t+1\right)}$ determines a unique deformation
\[
\widetilde{\xi}_{t}\in\Def_{Q_{m_{t},\N_{t+1},r_{t}}\left(\mathcal{H}_{t}\right),\xi_{t}\left(w\right)}\left(A\right)
\]
whose image is $\widetilde{q}^{\left(t+1\right)}$. After choosing
an ordered representative of $\widetilde{\xi}_{t}$, we may write
it as
\[
\widetilde{\xi}_{t}=\left(\widetilde{c}_{i}^{\left(t+1\right)}\otimes\left(\widetilde{q}_{i}^{\left(t\right)}\right)^{r_{t}}\right)_{i=1}^{m_{t}}.
\]
We set $\widetilde{q}^{\left(t\right)}=\left(\widetilde{q}_{1}^{\left(t\right)},\dots,\widetilde{q}_{m_{t}}^{\left(t\right)}\right)$
and define $\widetilde{w}_{t+1}$ as the matrix whose columns are
$\widetilde{c}_{1}^{\left(t+1\right)},\dots,\widetilde{c}_{m_{t}}^{\left(t+1\right)}$.
In view of Lemma \ref{lem:InversaWaringPreservaZ}, the tuple $\left(\widetilde{q}^{\left(t\right)},\widetilde{w}_{t+1},\widetilde{v}\right)$
is in fact an $A$-deformation of $\rho_{t}\left(w\right)$ inside
$Z_{t}$ with 
\[
\Def\left(\tilde{\mu_{t}}\right)\left(\widetilde{q}^{\left(t\right)},\widetilde{w}_{t+1},\widetilde{v}\right)=\left(\widetilde{q}^{\left(t+1\right)},\widetilde{v}\right),
\]
This shows surjectivity. To prove injectivity, let $\left(\widetilde{q}^{(t)},\widetilde{w}_{t+1},\widetilde{v}\right)$
and $\left(\overline{q}^{(t)},\overline{w}_{t+1},\overline{v}\right)$
have the same image in $\Def_{\mathcal{P}_{t+1}\left(w\right)}\left(A\right)$.
Then there exists $h\in\mathcal{G}_{t+1}^{1}\left(A\right)$ with
\[
h\cdot\left(\overline{q}^{(t+1)},\overline{v}\right)=\left(\widetilde{q}^{(t+1)},\widetilde{v}\right).
\]
We can lift $h$ to an element $\widehat{h}\in\mathcal{G}_{t}^{1}\left(A\right)$
by putting the identity in the $t$-th factor and replacing the second
deformation by its transform, we may assume that $\overline{v}=\widetilde{v}$
and $\overline{q}^{\left(t+1\right)}=\widetilde{q}^{\left(t+1\right)}$

Thus the two deformations define two Waring decompositions of the
same formal deformation of $q^{(t+1)}$. Namely, 
\[
\sum_{i=1}^{m_{t}}\widetilde{c}_{i}^{(t+1)}\otimes\left(\widetilde{q}_{i}^{(t)}\right)^{r_{t}}=\widetilde{q}^{(t+1)}=\overline{q}^{(t+1)}=\sum_{i=1}^{m_{t}}\overline{c}_{i}^{(t+1)}\otimes\left(\overline{q}_{i}^{(t)}\right)^{r_{t}}
\]
 Therefore the two decompositions have the same image under $\Psi_{m_{t},\N_{t+1},r_{t}}^{\mathcal{H}_{t}}$.
By the regularity of $\xi_{t}(w)$, the induced map on local deformation
functors is injective. Hence the two induced $A$-points of $Q_{m_{t},\N_{t+1},r_{t}}\left(\mathcal{H}_{t}\right)$
coincide, namely 
\[
\left[\left(\widetilde{c}_{i}^{\left(t+1\right)}\otimes\left(\widetilde{q}_{i}^{\left(t\right)}\right)^{r_{t}}\right)_{i=1}^{m_{t}}\right]=\left[\left(\overline{c}_{i}^{\left(t+1\right)}\otimes\left(\overline{q}_{i}^{\left(t\right)}\right)^{r_{t}}\right)_{i=1}^{m_{t}}\right]
\]
 as elements of $\text{\ensuremath{\left[Q_{m_{t},\N_{t+1},r_{t}}\left( \mathcal{H}_{t} \right)\right]\left( A \right)}}$.
Thus their ordered representatives differ only by permutations and
rescalings of the summands. Since both deformations are centered at
the fixed representative $\rho_{t}(w)$, in view of the previous discussion
the permutation part is trivial. Therefore there exist $\lambda_{1},\ldots,\lambda_{m_{t}}\in1+\mathfrak{m}_{A}$
such that, for every $i$, $\overline{q}_{i}^{\left(t\right)}=\lambda_{i}\widetilde{q}_{i}^{\left(t\right)}$
and $\overline{c}_{i}^{\left(t+1\right)}=\lambda_{i}^{-r_{t}}\widetilde{c}_{i}^{\left(t+1\right)}$.
This is precisely the action of the element 
\[
D_{t}=\mathrm{diag}\left(\lambda_{1},\ldots,\lambda_{m_{t}}\right)\in(1+\mathfrak{m}_{A})^{m_{t}}\subseteq\mathcal{G}_{t}^{1}\left(A\right).
\]
Hence the two original deformations define the same class in $\Def_{\mathcal{P}_{t}(w)}\left(A\right)$.
\end{proof}
We now prove Theorem \ref{thm:=00005CW=00005CcImplicaLiscio}.
\begin{proof}
Let $p\in\Phi_{X}\left(\W\c\right)$ and choose $w\in\W\c$ such that
$\Phi_{X}(w)=p$, denote with $\left[w\right]\in\Om\c\left(X\right)$
the image of $w$. In view of the Construction in \ref{Con:Costruzionexi_t}the
parameter $w$ determines a point $\xi_{t}(w)\in Q_{m_{t},\N_{t+1},r_{t}}\left(\mathcal{H}_{t}\right)$
such that $\Psi_{m_{t},\N_{t+1},r_{t}}^{\mathcal{H}_{t}}\left(\xi_{t}\left(w\right)\right)=q^{(t+1)}$.
Since the triple $\left(m_{t},m_{t+1},r_{t}\right)$ is regular and
the polynomials $q_{1}^{\left(t\right)},\ldots,q_{m_{t}}^{\left(t\right)}$
are linearly independent in view of Lemma \ref{lem:AlgebricamenteIndipendenti},
we have 
\[
\xi_{t}\left(w\right)\in Q_{m_{t},\N_{t+1},r_{t}}^{\mathrm{reg}}\left(\mathcal{H}_{t}\right)
\]
for every $t=1,\ldots,L-1$. Therefore Lemma \ref{lem:IsomorfismoP}
applies at every layer and gives isomorphisms
\[
\widehat{\OO_{\mathcal{P}_{t+1}\left(w\right)}}\simeq\widehat{\OO_{\mathcal{P}_{t}\left(w\right)}}
\]
for every $t=1,\ldots,L-1$. Composing these isomorphisms, we obtain
an isomorphism
\[
\widehat{\OO_{\mathcal{P}_{L}\left(w\right)}}\simeq\widehat{\OO_{\mathcal{P}_{1}\left(w\right)}}.
\]
Now, in view of Remark \ref{rem:OsservazioniP} $\mathcal{P}_{1}=\Om\c\left(X\right)$
with $\mathcal{P}_{1}\left(w\right)=[w]$, on the other hand $\rho_{L}=\Phi_{X}$
and $\mathcal{G}_{L}\left(X\right)=\{1\}$, hence $\mathcal{P}_{L}=\F\left(X\right)$
and $\mathcal{P}_{L}\left(w\right)=\Phi_{X}\left(w\right)=p$. Thus
the previous isomorphism is exactly the morphism on completed local
rings induced by the quotient realization map
\[
\widehat{\phi_{X,w}}:\widehat{\OO_{\F\left(X\right),p}}\longrightarrow\widehat{\OO_{\Om\c\left(X\right),[w]}}.
\]
Hence $\widehat{\phi_{X,w}}$ is an isomorphism for every $w\in\W\c$.
Since the schemes involved are of finite type over $\CC$, this implies
that $\phi_{X}:\Om^{\circ}\left(X\right)\to\F\left(X\right)$ is étale
at every point of $\Om^{\circ}\left(X\right)$, hence $\phi_{X}\left(\Om^{\circ}\left(X\right)\right)=\Phi_{X}\left(\W^{\circ}\right)$
is open in $\F\left(X\right)$, because étale morphisms are open.
Moreover $\Om^{\circ}\left(X\right)$ is smooth in view of Proposition
\ref{prop:AmpiezzeNonCrescentiLuogoPienoLiscio} . Since
\[
\phi_{X}:\Om^{\circ}\left(X\right)\to\Phi_{X}\left(\W^{\circ}\right)
\]
is a surjective étale morphism, smoothness descends étale-locally
and so $\Phi_{X}\left(\W^{\circ}\right)$ is an open subset contained
in the smooth locus of $\F\left(X\right)$.
\end{proof}

\subsection{Consequences}
\begin{prop}
\label{prop:TripleRegolari}Let $a,b\geq1$ be integers, the the triple
$\left(a,b,r\right)$ is regular in the sense of Definition \ref{def:RegularTriple}
\begin{itemize}
\item for $r\geq3$ when $a\ge b\ge1$
\item for $r=2$ when $a=b$.
\end{itemize}
\end{prop}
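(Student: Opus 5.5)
The plan is to show that, on the whole of $Q_{a,B,r}\left(E\right)$, the map $\Psi_{a,B,r}^{E}$ is an open immersion into $\sigma_{a}\left(Y_{E,B,r}\right)$ for every $E$ with $\dim E\ge a$ and $\dim B=b$, under the hypotheses of the statement. An open immersion induces isomorphisms on all completed local rings, which is exactly regularity in the sense of Definition \ref{def:RegularTriple}. I would establish three properties in turn: injectivity on closed points, injectivity of the differential (a Terracini-type computation), and openness of the image together with a normality/unibranch argument, closed off by Zariski's main theorem.

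\emph{Step 1 (injectivity).} Fix $x\in Q_{a,B,r}\left(E\right)$ with $y=\sum_{i}\lambda_{i}c_{i}\ot\ell_{i}^{r}$. The flattening $B\inv\ot\Sym^{r-1}E\inv\to E$ induced by $y$ has image $E'=\langle\ell_{1},\dots,\ell_{a}\rangle$. Indeed, contracting with $\gamma\ot\ell_{j}^{\inv(r-1)}$, where $\ell^{\inv}$ is a dual basis of $E'$ and $\gamma(c_{j})\neq0$, isolates $\ell_{j}$. So $E'$, hence the coordinates $x_{i}=\ell_{i}$ up to scale, are determined by $y$ once we know the decomposition is essentially unique inside $B\ot\Sym^{r}E'$.
\begin{itemize}
\item For $r\ge3$, uniqueness follows from Jennrich's simultaneous diagonalization applied to the cubic-type contractions. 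Equivalently, the only points of $v_{r}\PP\left(E'\right)$ in the span of $\ell_{1}^{r},\dots,\ell_{a}^{r}$ are the $\ell_{i}^{r}$ themselves, since $\Sym^{r}$ of a basis is independent and a pure power $(\sum\mu_{i}x_{i})^{r}$ has mixed monomials unless only one $\mu_{i}\neq0$. It follows that the $B$-valued coefficients $c_{i}$, up to the rescaling already quotiented in $Q$, are determined.
\item For $r=2$ and $a=b$, the $c_{i}$ form a basis of $B$. Contracting $y$ with the dual basis $c_{j}\inv$ recovers each $\lambda_{j}\ell_{j}^{2}$ up to the $S_{a}$-relabelling, so uniqueness again holds.
\end{itemize}

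\emph{Step 2 (unramifiedness).} I would compute $d\Psi$ at $x$. The image of the differential is $\sum_{i}\left(B\ot\ell_{i}^{r}+c_{i}\ot\ell_{i}^{r-1}E\right)$, and I must check that this sum is direct modulo the obvious overlaps $\CC c_{i}\ot\ell_{i}^{r}$, which are accounted for by the rescaling. Choose coordinates $x_{1},\dots,x_{a}$ extending to a basis of $E$. The pieces $\ell_{i}^{r}$ and $\ell_{i}^{r-1}e$ are distinct monomials whenever $r\ge3$, so the dimension count gives $\dim Q=a(b+\dim E-1)$ and the differential is injective. For $r=2$ the monomials $x_{i}x_{j}$ occur both in $c_{i}\ot x_{i}x_{j}$ and $c_{j}\ot x_{j}x_{i}$. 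These are independent precisely when $c_{i},c_{j}$ are independent, which holds when $a=b$. This is exactly where the case $r=2$, $a>b$ fails, consistent with the statement.

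\emph{Step 3 (openness and conclusion).} By Steps 1--2, $\Psi$ is an injective, unramified, quasi-finite morphism from the smooth variety $Q$ (a free quotient by $S_{a}$ of a smooth variety) onto its image, and $\dim Q=\dim\sigma_{a}$. This is the main obstacle: to upgrade this to an open immersion I need $\Psi(Q)$ to be open in $\sigma_{a}$ and $\sigma_{a}$ to be unibranch along it. This must be done carefully, because degenerate points such as $x^{r-1}y$ have concise flattening but are not in the image. What I would try first is the following. Since $\Psi$ is injective, unramified and $\dim Q=\dim\sigma_{a}$, it is birational onto $\sigma_{a}$. I would then show that $\Psi(Q)$ is open by exhibiting it as the locus in $\{z\in\sigma_{a}:\text{flattening rank }=a\}$ where the Jennrich pencil (for $r\ge3$), or the contraction by a basis of $B\inv$ (for $r=2$, $a=b$), is diagonalizable with distinct eigen-directions. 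This is a Zariski-open condition via a discriminant, and it contains every point of $\Psi(Q)$ by Step 1. On this open set the eigenvector construction gives an explicit algebraic inverse of $\Psi$, so no normality of $\sigma_{a}$ needs to be assumed. If the distinct-eigenvalue locus does not cover all of $\Psi(Q)$, I would instead use generic combinations of several contractions to separate the $\ell_{i}$. Finally, Zariski's main theorem, or directly the inverse constructed above, gives that $\Psi$ is an open immersion, hence every point of $Q_{a,B,r}\left(E\right)$ is regular.
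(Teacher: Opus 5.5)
Your Steps 1 and 2 are essentially the paper's argument: uniqueness of the decomposition at closed points, and injectivity of the differential by tracking the monomials $x_i^{r-1}x_j$. For $r=2$ the relevant coefficient is $\alpha_{ij}c_i+\alpha_{ji}c_j$ of $x_ix_j$, which is exactly where $a=b$ is used.

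The genuine difference is Step 3. The paper closes in one line: an injective, unramified $\Psi$ is a locally closed immersion, and a dense locally closed image is open. You correctly flag that the first implication is not automatic. For instance, $t\mapsto(t^2-1,t(t^2-1))$ from $\AA^1\setminus\{-1\}$ to the nodal cubic $y^2=x^2(x+1)$ is bijective and unramified but not an immersion. So one needs either unibranchness or normality of $\sigma_a$ along the image together with Zariski's main theorem, or an explicit local inverse. Your Jennrich construction supplies the latter. Open subsets of $\sigma_a$ where the flattening has rank $a$ and the pencil for a fixed pair of contractions has distinct eigenvalues cover $\Psi(Q)$, and on each of them the eigenlines are algebraic and invert $\Psi$. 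The paper's route is shorter but leaves precisely this point unjustified; yours is longer but actually proves that $\Psi$ is an open immersion.

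When you write Step 3 out, check three things.

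\emph{Decomposition on all of $V$.} A point $z$ of the Jennrich locus $V\subseteq\sigma_a$ is a priori only a limit of sums, so you must show $z\in\sum_i B\ot\ell_i'(z)^r$, where $\ell_i'(z)$ are the eigenlines. This holds on the dense subset of honest sums in $V$: flattening rank $a$ forces their linear forms to be independent, and distinct eigenvalues then identify those forms with the eigenlines. Since membership is a closed condition on $V$, it holds on all of $V$.

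\emph{$V$ is larger than $\Psi(Q)$.} For $r\ge3$ and $a=b=2$, the point $c\ot(\ell_1^r+\ell_2^r)$ lies in $V$ but its coefficients do not span $B$. So either intersect $V$ with the open condition that $B^{\ast}\to\Sym^rE$ has rank $b$, or note that your inverse shows the map from the larger parameter space (the spanning condition relaxed to what the eigenvalue condition needs) is an open immersion, and $Q$ is open in that space.

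\emph{The ``Equivalently'' sentence in Step 1.} It does not prove uniqueness when $b<a$: the same fact about powers holds for $r=2$, where uniqueness fails. Rely instead on Jennrich/Kruskal for $y\in E\ot E\ot(B\ot\Sym^{r-2}E)$. Alternatively, use the paper's derivative trick placing each $m_j^{r-1}$ in $\langle\ell_1^{r-1},\dots,\ell_a^{r-1}\rangle$, which is where $r\ge3$ really enters.
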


\begin{proof}
Let $E$ and $B$ be vector spaces with $\dim E\geq a$ and $\dim B=b$,
we prove that every point of $Q_{a,B,r}\left(E\right)$ is regular
in the sense of Definition \ref{def:RegularTriple}. Let $x\in Q_{a,B,r}\left(E\right)$
and set $y=\Psi_{a,B,r}^{E}\left(x\right)$, choose an ordered representative
of $x$, say $\left(c_{1}\otimes\ell_{1}^{r},\ldots,c_{a}\otimes\ell_{a}^{r}\right)$
where $\ell_{1},\dots,\ell_{a}$ are linearly independent and $c_{1},\dots,c_{a}$
span $B$. We prove that, under the assumptions of the proposition
\[
\Psi_{a,B,r}^{E}:Q_{a,B,r}\left(E\right)\to\sigma_{a}\left(Y_{E,B,r}\right)
\]
is an open immersion, hence it induces an isomorphism on completed
local rings at $x$ and $y$.We first prove injectivity on geometric
points. Assume first that $r\geq3$. Suppose that two active decompositions
give the same tensor:
\[
\sum_{i=1}^{a}c_{i}\otimes\ell_{i}^{r}=\sum_{j=1}^{a}d_{j}\otimes m_{j}^{r},
\]
where also $m_{1},\dots,m_{a}$ are linearly independent and $d_{1},\dots,d_{a}$
span $B$. Let $\alpha\in B^{\ast}$ be such that $\alpha\left(c_{i}\right)\neq0$
and $\alpha\left(d_{j}\right)\neq0$ for all $\left(i,j\right)$,
then
\[
\sum_{i=1}^{a}\alpha(c_{i})\ell_{i}^{r}=\sum_{j=1}^{a}\alpha\left(d_{j}\right)m_{j}^{r}.
\]
Let $U=\langle\ell_{1},\ldots,\ell_{a}\rangle\subseteq E$ and let
us first show that $m_{j}\in U$. Let $\beta\in E^{\ast}$ be any
linear form vanishing on $U$. Applying the directional derivative
$\partial_{\beta}$ to the previous equality gives
\[
r\sum_{j=1}^{a}\alpha\left(d_{j}\right)\beta\left(m_{j}\right)m_{j}^{r-1}=0.
\]
Since $m_{1},\dots,m_{a}$ are linearly independent, also $m_{1}^{r-1},\dots,m_{a}^{r-1}$
are linearly independent and hence $\beta\left(m_{j}\right)=0$ for
every $j$. Since this holds for every $\beta$ vanishing on $U$,
it follows that $m_{j}\in U$ for every $j$. Now choose coordinates
on $U$ such that $\ell_{i}=x_{i}$ for $i=1,\ldots,a$. We claim
that the only powers contained in the linear span
\[
\langle x_{1}^{r-1},\ldots,x_{a}^{r-1}\rangle\subseteq\Sym^{r-1}U
\]
are the coordinate powers. Indeed, if $m=\sum_{i=1}^{a}\lambda_{i}x_{i}$
and 
\[
m^{r-1}\in\langle x_{1}^{r-1},\ldots,x_{a}^{r-1}\rangle,
\]
then all mixed monomials in $m^{r-1}$ must vanish. Since $r-1\geq2,$
this is possible only if at most one of the $\lambda_{i}$'s is nonzero.
Thus $\left[m\right]\in\left\{ \left[x_{1}\right],\dots,\left[x_{a}\right]\right\} .$
On the other hand, differentiating the equality
\[
\sum_{i=1}^{a}\alpha(c_{i})x_{i}^{r}=\sum_{j=1}^{a}\alpha\left(d_{j}\right)m_{j}^{r}
\]
with respect to suitable linear forms dual to the $m_{j}$'s shows
that each $m_{j}^{r-1}\in\langle x_{1}^{r-1},\ldots,x_{a}^{r-1}\rangle$.
Therefore, after a permutation, we have $m_{i}=\lambda_{i}\ell_{i}$
for some $\lambda_{i}\in\CC^{\times}$. Substituting in the original
vector-valued equality gives
\[
\sum_{i=1}^{a}\left(c_{i}-\lambda_{i}^{r}d_{i}\right)\otimes\ell_{i}^{r}=0.
\]
Since $\ell_{1}^{r},\ldots,\ell_{a}^{r}$ are linearly independent,
we get $c_{i}=\lambda_{i}^{r}d_{i}$ for every $i$. Hence the two
decompositions differ only by the rescalings and by a permutation.
Thus $\Psi_{a,B,r}^{E}$ is injective on geometric points when $r\geq3$.
Now assume $r=2$ and $a=b$. In this case $c_{1},\dots,c_{a}$ and
$d_{1},\dots,d_{a}$ are bases of $B$. From an equality
\[
\sum_{i=1}^{a}c_{i}\otimes\ell_{i}^{2}=\sum_{j=1}^{a}d_{j}\otimes m_{j}^{2}
\]
we consider the image of the induced linear map $B^{\ast}\to\Sym^{2}E$.
From the left-hand side, this image is $\langle\ell_{1}^{2},\dots,\ell_{a}^{2}\rangle$,
while using the right-hand side, it is $\langle m_{1}^{2},\dots,m_{a}^{2}\rangle$
and hence hence we get that
\[
\langle\ell_{1}^{2},\dots,\ell_{a}^{2}\rangle=\langle m_{1}^{2},\dots,m_{a}^{2}\rangle.
\]
We see that the intersection of $\PP\langle\ell_{1}^{2},\dots,\ell_{a}^{2}\rangle$
with $v_{2}\PP\left(E\right)$ is supported on the points $\left[\ell_{1}^{2}\right],\dots,\left[\ell_{a}^{2}\right]$,
in fact, after taking coordinates$\ell_{i}=x_{i}$, a square
\[
\left(\sum_{i}\lambda_{i}x_{i}+n\right)^{2}
\]
with $n$ in a complement of $U=\langle\ell_{1},\ldots,\ell_{a}\rangle$
belongs to $\langle x_{1}^{2},\dots,x_{a}^{2}\rangle$ only if $n=0$
and at most one coefficient $\lambda_{i}$ is nonzero hence, eventually
after a permutation we see that $m_{i}=\lambda_{i}\ell_{i}$. Substituting
again in the original equality we have that $c_{i}=\lambda_{i}^{2}d_{i}$
for every $i$, hence the two decompositions are the same point of
$Q_{a,B,2}\left(E\right)$ and thus $\Psi_{a,B,2}^{E}$ is injective
on geometric points when $a=b$. We now prove that $\Psi_{a,B,r}^{E}$
is unramified at every point in both cases. The map $S_{a,B,r}\left(E\right)\to Q_{a,B,r}\left(E\right)$
since it is the quotient of a free action of a finite group, it is
enough to check the statement for ordered decompositions. Taking the
parametrization $B\times E\to B\otimes\Sym^{r}E$, given by $\left(c,\ell\right)\mapsto c\otimes\ell^{r}$
we see that the tangent space to $Y_{E,B,r}$ is
\[
T_{c_{i}\otimes\ell_{i}^{r}}Y_{E,B,r}=B\otimes\ell_{i}^{r}+c_{i}\otimes\ell_{i}^{r-1}E,
\]
thus the kernel of the differential of the sum map is generated by
relations
\[
\sum_{i=1}^{a}\left(b_{i}\otimes\ell_{i}^{r}+c_{i}\otimes\ell_{i}^{r-1}\eta_{i}\right)=0
\]
with $b_{i}\in B$ and $\eta_{i}\in E$. We show that every summand
in such relation is zero. Choose coordinates on $E$ such that $\ell_{i}=x_{i}$
for $i=1,\dots,a$ and write $\eta_{i}=\sum_{j}\alpha_{ij}x_{j}$.
Assume first that $r\geq3$, then for $j\neq i$, the monomial $x_{i}^{r-1}x_{j}$
can only occur in the $i$-th summand with coefficient $\alpha_{ij}c_{i}$.
Since $c_{i}\neq0$, we see that $\alpha_{ij}=0$ for every $\ensuremath{j\neq i}$,
therefore $\eta_{i}=\alpha_{ii}x_{i}$ and the $i$-th summand becomes
$(b_{i}+\alpha_{ii}c_{i})\otimes x_{i}^{r}.$ Since the monomials
$x_{1}^{r},\dots,x_{a}^{r}$ are distinct, we get $b_{i}+\alpha_{ii}c_{i}=0$
for every $i$ and hence the differential is injective. Now assume
$r=2$ and $a=b$, then $c_{1},\dots,c_{a}$ are a basis of $\left(B\right)$.
For $i\neq j\leq a$, the coefficient of the monomial $x_{i}x_{j}$
is $\alpha_{ij}c_{i}+\alpha_{ji}c_{j}$ so, since $c_{i}$ and $c_{j}$
are linearly independent, we get $\alpha_{ij}=\alpha_{ji}=0$. If
$j>a$, the coefficient of $x_{i}x_{j}$ is $\alpha_{ij}c_{i}$, hence
again $\alpha_{ij}=0$ and therefore $\eta_{i}=\alpha_{ii}x_{i}$.
Looking at the coefficient of $x_{i}^{2}$, we get $b_{i}+\alpha_{ii}c_{i}=0$.
Thus the differential is injective also in the quadratic case when
$a=b$. We have proved that, in the cases of the proposition, the
morphism
\[
\Psi_{a,B,r}^{E}:Q_{a,B,r}\left(E\right)\to\sigma_{a}\left(Y_{E,B,r}\right)
\]
is injective on geometric points and unramified at every point, therefore
it is a locally closed immersion. Its image is dense in $\sigma_{a}\left(Y_{E,B,r}\right)$
because it is the image of a non-empty open subset of the usual secant
parametrization. Since $\sigma_{a}\left(Y_{E,B,r}\right)$ is irreducible,
a dense locally closed subset is open. Hence $\Psi_{a,B,r}^{E}$ is
an open immersion.
\end{proof}
\begin{thm}\label{thm:TeoremaFinale}
Let $X$ be a fully connected network with non-increasing widths and
scalar output and suppose that for every layer $t$ either $r_{t}\ge3$
or $r_{t}=2$ and $\dim\N_{t}=\dim\N_{t+1}$. Let $w\in\W$, if $\Phi_{X}\left(w\right)$
is singular then $w$ contains a matrix with non-maximal rank or a
matrix with a vanishing column. Moreover $X$ is reduced in the sense
of Definition \ref{def:ReteRidotta} and generically finitely identifiable.
\end{thm}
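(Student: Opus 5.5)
The plan is to deduce Theorem \ref{thm:TeoremaFinale} by combining Theorem \ref{thm:=00005CW=00005CcImplicaLiscio} with Proposition \ref{prop:TripleRegolari}, and then to read off the identifiability and reducedness statements from the quotient-theoretic results.

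\emph{Step 1: the layerwise triples are regular.} For each $t=1,\dots,L-1$ I verify that the triple $\left(m_{t},m_{t+1},r_{t}\right)$, with $m_{t}=\dim\N_{t}$, satisfies the hypotheses of Proposition \ref{prop:TripleRegolari}. Non-increasing widths give $m_{t}\ge m_{t+1}\ge1$, which is the condition $a\ge b\ge1$ required there. If $r_{t}\ge3$ the triple is regular. If $r_{t}=2$, the hypothesis $m_{t}=m_{t+1}$ is exactly the case $a=b$, so the triple is again regular. The definition of regular triple also asks that $\dim\mathcal{H}_{t}\ge m_{t}$. I expect this to be the one point needing care, and I would handle it with Lemma \ref{lem:AlgebricamenteIndipendenti}: at any $w\in\W\c$ the forms $q_{i}^{\left(t\right)}\in\mathcal{H}_{t}$ are algebraically independent, hence linearly independent, so $\dim\mathcal{H}_{t}\ge m_{t}$ automatically. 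For $t=L-1$ the scalar output gives $m_{L}=1$, and the condition $m_{L-1}\ge1$ is trivial.

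\emph{Step 2: singular points come from degenerate parameters.} Theorem \ref{thm:=00005CW=00005CcImplicaLiscio} now applies and shows that $\Phi_{X}\left(\W\c\right)$ lies in the smooth locus of $\F\left(X\right)$. Take $w\in\W$ with $\Phi_{X}\left(w\right)$ singular. Then $w\notin\W\c$, and by the definition of $\W\c$ some $w_{i}$ either has non-maximal rank or has a zero column corresponding to a hidden neuron. This is the first claim.

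\emph{Step 3: identifiability and reducedness.} Since $X$ is fully connected, $\W\c$ is nonempty; a generic parameter has maximal-rank matrices with all entries nonzero. Fix $w\in\W\c$. The proof of Theorem \ref{thm:=00005CW=00005CcImplicaLiscio} shows, via Lemma \ref{lem:IsomorfismoP} and Remark \ref{rem:OsservazioniP}, that $\widehat{\phi_{X,w}}$ is an isomorphism of completed local rings. Because $\Om\c$ is open in $\Om$, this local ring agrees with the one for $\Om\left(X\right)$. Proposition \ref{lem:LocalmenteEtale} then says that $w$ is good. Proposition \ref{prop:BuonoImplicaGFI} gives that $X$ is generically finitely identifiable and reduced, the latter through Lemma \ref{lem:GFIimplicaRidotta}.

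The main obstacle is the bookkeeping in Step 1: checking that every layer, including the last one with scalar output, falls into a case covered by Proposition \ref{prop:TripleRegolari}, and that the dimension requirement on $E=\mathcal{H}_{t}$ holds. Everything else is a direct assembly of earlier results.
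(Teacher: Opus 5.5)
Your proposal is correct and follows essentially the same route as the paper. The first claim comes from combining the layerwise regularity theorem with Proposition \ref{prop:TripleRegolari}, and identifiability and reducedness come from producing a good parameter via Proposition \ref{lem:LocalmenteEtale}, then applying Proposition \ref{prop:BuonoImplicaGFI} and Lemma \ref{lem:GFIimplicaRidotta}; your extra bookkeeping (checking $\dim\mathcal{H}_{t}\ge m_{t}$, the last layer with $m_{L}=1$, and identifying the completed local ring of $\Om\c$ with that of $\Om\left(X\right)$ at $\left[w\right]$) only makes explicit steps the paper leaves implicit.
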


\begin{proof}
The first statement follows directly from Theorem \ref{thm:=00005CW=00005CcImplicaLiscio}
and Proposition \ref{prop:TripleRegolari}. Moreover, in view of Proposition
\ref{lem:LocalmenteEtale} there exists a good parameter, but this
is equivalent to $X$ being generically finitely identifiable in view
of Proposition \ref{prop:BuonoImplicaGFI} and reduced in view of
Lemma \ref{lem:GFIimplicaRidotta}.
\end{proof}

\newpage
\bibliographystyle{alpha}
\bibliography{References}

\end{document}